\newtheorem{theo+}              {Theorem}           [section]
\newtheorem{prop+}  [theo+]     {Proposition}
\newtheorem{coro+}  [theo+]     {Corollary}
\newtheorem{lemm+}  [theo+]     {Lemma}
\newtheorem{exam+}  [theo+]     {Example}
\newtheorem{rema+}  [theo+]     {Remark}
\newtheorem{defi+}  [theo+]     {Definition}
\newenvironment{theorem}{\begin{theo+}}{\end{theo+}}
\newenvironment{corollary}{\begin{coro+}}{\end{coro+}}
\theoremstyle{plain} \theoremstyle{remark}
\newtheorem{example}{Example}
\def \r{\mbox{${\mathbb R}$}}
\def\E{/\kern-1.0em \equiv }
\def\fb{$f$-biharmonic\;}
\title{$f$-Biharmonic maps and $f$-biharmonic submanifolds II}
\author{Ye-Lin Ou$^{*}$ }
\thanks{$^{*}$ Research supported by
Faculty Development Program of Texas A $\&$ M University-Commerce, 2015.}
\address{Department of
Mathematics,\newline\indent Texas A $\&$ M University-Commerce,
\newline\indent Commerce, TX 75429,\newline\indent USA.\newline\indent
E-mail:yelin.ou@tamuc.edu }
\begin{document}

\title[$f$-Biharmonic maps and $f$-biharmonic submanifolds II]
{$f$-Biharmonic maps and $f$-biharmonic submanifolds II }

\subjclass{58E20} \keywords{$f$-Biharmonic maps, $f$-biharmonic submanifolds,  biharmonic maps, biharmonic conformal
immersions, minimal surfaces.}
\thanks{}
\date{04/28/2016}

\maketitle
\section*{Abstract}
\begin{quote} We continue our study \cite{Ou4} of \fb maps and 
\fb \\submanifolds  by exploring the applications of \fb maps and the relationships among biharmonicity, $f$-biharmonicity and conformality of maps between Riemannian manifolds. We are able to characterize harmonic maps and minimal submanifolds by using the concept of \fb maps and prove that the set of all \fb maps from $2$-dimensional domain is invariant under the conformal change of the metric on the domain. We  give an improved equation for  \fb  hypersurfaces and use it to prove some rigidity theorems about \fb hypersurfaces in  nonpositively curved manifolds, and to give some classifications of \fb hypersurfaces in Einstein spaces and in space forms. Finally, we also use the improved  \fb hypersurface equation to obtain an improved equation and some classifications of biharmonic conformal immersions of surfaces into a $3$-manifold.
{\footnotesize  }
\end{quote}

\section{Introduction}
 {\em Biharmonic maps} and  {\em \fb maps}  are maps $\phi: (M,g)\longrightarrow (N,h)$ between Riemannian manifolds which are the critical points of 
\begin{eqnarray}\notag
{\rm the\; bienergy:}\;\;\;\;E_{2}(\phi)=\frac{1}{2}\int_\Omega |\tau(\phi)|^2v_g,\;\;\;{\rm and}\\\notag
{\rm the\;} f{\rm-bienergy:}\;\;\;E_{2,f}(\phi)=\frac{1}{2}\int_\Omega f\,|\tau(\phi)|^2v_g,
\end{eqnarray}
respectively, where $\Omega$ is a compact domain of $ M$ and $\tau(\phi)$ is the tension field of the map $\phi$.
The Euler-Lagrange equations of these functionals give the biharmonic map equation (see \cite{Ji1})
\begin{equation}\label{BTF}
\tau_{2}(\phi):={\rm
Trace}_{g}(\nabla^{\phi}\nabla^{\phi}-\nabla^{\phi}_{\nabla^{M}})\tau(\phi)
- {\rm Trace}_{g} R^{N}({\rm d}\phi, \tau(\phi)){\rm d}\phi =0,
\end{equation}
where $R^{N}$ denotes the curvature operator of $(N, h)$ defined by
$$R^{N}(X,Y)Z=
[\nabla^{N}_{X},\nabla^{N}_{Y}]Z-\nabla^{N}_{[X,Y]}Z,$$

and the $f$-biharmonic map equation  ( see\cite{Lu} and also \cite{Ou4})
\begin{equation}\label{fbhe}
\tau_{2,f} (\phi)\equiv f\tau_2(\phi)+(\Delta f)\tau(\phi)+2\nabla^{\phi}_{{\rm grad}\, f}\tau(\phi)=0,
\end{equation}
where  $\tau_2( \phi)$ is the  bitension field of $\phi$ defined in (\ref{BTF}).\\

 As the images of biharmonic isometric immersions, biharmonic submanifolds have become a popular topic of research  with many progresses since the beginning of this century (see a recent survey \cite{Ou5} and the references therein for more details). In a recent paper \cite{Ou4}, we introduced and studied $f$-biharmonic submanifolds as those submanifolds whose isometric immersions $\phi: M^m\longrightarrow (N^n, h)$ are $f$-biharmonic maps for some positive function $f$ defined on the submanifolds.  $f$-Biharmonic submanifolds are generalizations of of biharmonic submanifolds and of minimal submanifolds since (i) \fb submanifolds with $f={\rm constant}$ are precisely biharmonic submanifolds, and (ii) any minimal submanifold is $f$-biharmonic for any positive function defined on the submanifold. As it is seen in \cite{Ou4}  and Example 1 of Section \ref{Sect3} of this paper, there are many examples of $f$-biharmonic submanifolds which are neither biharmonic nor minimal. \\

An interesting link among biharmonicity, $f$-biharmonicity and conformality is the following theorem which, although holds only for two-dimensional domain, has played an important role in the study of  biharmonic maps from  two-dimensional domains (see \cite{Ou4} and \cite{WOY}) and will be used to study  biharmonic conformal immersions in the last section of this paper.\\

{\bf Theorem A.}(\cite{Ou4} A map $\phi: (M^2, g)\longrightarrow (N^n, h)$ is an $f$-biharmonic map if and only if $\phi:
(M^2, f^{-1}g)\longrightarrow (N^n,h)$ is a biharmonic map. In particular, if a surface (i.e., an isometric immersion)  $\phi: (M^2, g=\phi^*h)\longrightarrow (N^n, h)$ is an $f$-biharmonic surface if and only if the conformal immersion$\phi:(M^2, f^{-1}g)\longrightarrow (N^n,h)$ is a biharmonic map.\\\

Regarding the topological and the geometric obstacles to the existence of general \fb maps, it was proved in \cite{Ou4} that an \fb map from a compact manifold into a non-positively curved manifold with constant $f$-bienergy density is a harmonic map, and that any \fb map from a compact manifold into Euclidean space is a constant map. \\

In this paper, we continue our work  on \fb maps and \fb \\submanifolds  by further exploring the applications of \fb maps and the relationships among harmonicity, biharmonicity, $f$-biharmonicity and conformality of maps between Riemannian manifolds. We prove that a map between Riemannian manifold is a harmonic map if and only if it is an \fb map for any positive function $f$, and that a submanifold of a Riemannian manifold is minimal if and only if it is \fb for any positive  function $f$ defined on the submanifold. We show that the set of all \fb maps from $2$-dimensional domain is invariant under the conformal change of the metric on the domain. We prove some rigidity theorems about \fb hypersurfaces in  nonpositively curved manifolds and give some classifications of \fb hypersurfaces in Einstein spaces and in space forms by using an improved form of  \fb  \\hypersurface equation. We also use the improved  \fb hypersurface equation to obtain an improved  equation and some classifications of biharmonic conformal immersions of surfaces into a $3$-manifold.

\section{Harmonicity, biharmonicity, $f$-biharmonicity and conformality}

It is well known that harmonicity of maps from a $2$-dimensional domain is invariant under conformal transformations of the domain. It is also known that biharmonic maps do not have this kind of invariance regardless of domain dimensions. However,  as it is shown by our next theorem, the set
\begin{eqnarray}\notag
\mathcal{H}_{2, f}(M^2, N^n)=&&\{(\phi, f)|\; \phi: (M^2, g)\longrightarrow (N^n,h),\; f: (M^2, g)\longrightarrow (0,\infty),\\ \notag && \phi \;{\rm is\;an\;f}-{\rm biharmonic\;map}\}
\end{eqnarray}
 of all \fb maps is invariant under conformal transformation of the domain.
 \begin{theorem}
 The set $\mathcal{H}_{2, f}(M^2, N^n)$ of all \fb maps from a surface into a manifold is invariant under conformal transformation of the domain, more precisely, if $ \phi: (M^2, g)\longrightarrow (N^n,h)$ is \fb for a function $f=\alpha$, then $ \phi: (M^2, {\bar g}=\lambda^2 g)\longrightarrow (N^n,h)$ also an \fb map for $f=\alpha \lambda^{2}$.
\end{theorem}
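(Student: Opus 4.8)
The plan is to reduce everything to Theorem A, which translates $f$-biharmonicity of $\phi$ on a surface into genuine biharmonicity of $\phi$ on the conformally rescaled domain; once this reduction is in place the statement becomes a one-line manipulation of conformal factors, and no work with the bitension operator $\tau_2$ is needed.

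First I would apply Theorem A to the hypothesis: since $\phi:(M^2,g)\longrightarrow(N^n,h)$ is $f$-biharmonic with $f=\alpha$, the map $\phi:(M^2,\alpha^{-1}g)\longrightarrow(N^n,h)$ is a biharmonic map. Next I would spell out what the desired conclusion means under Theorem A: asserting that $\phi:(M^2,\bar g=\lambda^2 g)\longrightarrow(N^n,h)$ is $f$-biharmonic with $f=\alpha\lambda^2$ is, by Theorem A, exactly the statement that $\phi:(M^2,(\alpha\lambda^2)^{-1}\bar g)\longrightarrow(N^n,h)$ is biharmonic. The key observation is then the identity
\[
(\alpha\lambda^2)^{-1}\bar g=(\alpha\lambda^2)^{-1}\lambda^2 g=\alpha^{-1}g,
\]
so the metric appearing here is literally the one produced in the first step. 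Combining the two, $\phi:(M^2,(\alpha\lambda^2)^{-1}\bar g)\longrightarrow(N^n,h)$ is biharmonic, and applying Theorem A once more in the reverse direction yields that $\phi:(M^2,\bar g)\longrightarrow(N^n,h)$ is $f$-biharmonic with $f=\alpha\lambda^2$, which is the claim.

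I do not expect a genuine obstacle: the entire content is the conformal-factor bookkeeping $\alpha^{-1}g=(\alpha\lambda^2)^{-1}(\lambda^2 g)$ together with the fact that Theorem A is an equivalence. The only point deserving a word of care is the admissibility of the new weight — $\alpha\lambda^2$ must be a positive function on $M^2$ in order to qualify as an $f$ in the definition of $f$-biharmonicity — and this is immediate from $\alpha>0$ and $\lambda>0$.
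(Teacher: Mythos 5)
Your proposal is correct and follows essentially the same route as the paper: apply Theorem A to pass from $f$-biharmonicity with $f=\alpha$ to biharmonicity of $\phi:(M^2,\alpha^{-1}g)\to(N^n,h)$, rewrite $\alpha^{-1}g=(\alpha\lambda^2)^{-1}\bar g$, and use the equivalence in Theorem A once more to conclude. Your added remark on the positivity of the new weight $\alpha\lambda^2$ is a sensible (if obvious) check that the paper leaves implicit.
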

\begin{proof}
Let $ \phi: (M^2, g)\longrightarrow (N^n,h)$ be an \fb map for a function $f=\alpha$, then by Theorem A, the map $ \phi: (M^2, \alpha^{-1}g)\longrightarrow (N^n,h)$ is a biharmonic map. Noting that $(M^2, \alpha^{-1}g)=(M^2, (\alpha^{-1}\lambda^{-2})\lambda^2g)=(M^2, (\alpha \lambda^{2})^{-1}{\bar g})$, we conclude that the map $ \phi: (M^2, (\alpha \lambda^{2})^{-1}{\bar g})\longrightarrow (N^n,h)$ is a biharmonic map. Using Theorem A again we conclude that the map $ \phi: (M^2, {\bar g}=\lambda^{2}g)\longrightarrow (N^n,h)$ is an \fb map with $f=\alpha \lambda^{2}$. Thus, we obtain the theorem.
\end{proof}

It is clear from the definition of  \fb maps that a harmonic map $\phi:(M^m, g)\longrightarrow (N^n,h)$ is an $f$-biharmonic map for any function $f:(M^m, g)\longrightarrow (0, \infty)$.  Our next theorem shows that this property can actually be used to characterize harmonic maps between Riemannian manifolds.
\begin{theorem}
A map $\phi:(M^m, g)\longrightarrow (N^n,h)$ between Riemannian manifold is a harmonic map if and only if it is an $f$-biharmonic map for any function $f:(M^m, g)\longrightarrow (0, \infty)$.
\end{theorem}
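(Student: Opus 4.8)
The plan is to prove the two implications separately. The forward direction is immediate and is essentially the remark preceding the statement: if $\phi$ is harmonic then $\tau(\phi)=0$, so every term of the $f$-biharmonic map equation (\ref{fbhe}) — including $\tau_2(\phi)$, which is assembled entirely out of covariant derivatives of $\tau(\phi)$ — vanishes identically, whatever the positive function $f$ is. So I will concentrate on the converse.

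For the converse, assume $\phi$ is $f$-biharmonic for every $f\colon M^m\to(0,\infty)$. The first move is to feed in the constant function $f\equiv 1$: then ${\rm grad}\,f=0$ and $\Delta f=0$, so (\ref{fbhe}) reduces to $\tau_2(\phi)=0$, i.e. $\phi$ is biharmonic. Substituting $\tau_2(\phi)=0$ back into (\ref{fbhe}) for an arbitrary positive $f$ then leaves the identity
$$(\Delta f)\,\tau(\phi)+2\,\nabla^{\phi}_{{\rm grad}\,f}\,\tau(\phi)=0,$$
valid for every positive $f$ on $M$.

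The decisive step is then purely local: near an arbitrary point $p\in M$, I want a positive function $f$ with $({\rm grad}\,f)(p)=0$ but $(\Delta f)(p)\neq 0$. Working in geodesic normal coordinates centred at $p$, one can take $f=1+\chi\,r^2$, where $r$ is the distance to $p$ and $\chi$ is a bump function equal to $1$ near $p$ and supported in a normal ball: then $f\ge 1>0$ everywhere, $({\rm grad}\,f)(p)=0$, and $(\Delta f)(p)=\pm 2m\neq 0$. Evaluating the displayed identity at $p$ kills the covariant-derivative term and gives $(\Delta f)(p)\,\tau(\phi)(p)=0$, hence $\tau(\phi)(p)=0$; as $p$ is arbitrary, $\tau(\phi)\equiv 0$ and $\phi$ is harmonic.

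I do not expect a genuine obstacle; the only thing to be careful about is that an admissible test function must satisfy $f(p)>0$, so the term $f\,\tau_2(\phi)$ cannot be removed pointwise — which is precisely why one establishes biharmonicity (via the constant function) before attacking $\tau(\phi)$. As an alternative to the normal-coordinate construction, one can plug $f$, $h$ and the product $fh$ into the displayed identity and use the Leibniz rules for $\Delta$ and ${\rm grad}$ to obtain $\langle{\rm grad}\,f,{\rm grad}\,h\rangle\,\tau(\phi)=0$ for all positive $f,h$, whence $\tau(\phi)=0$; this amounts to the same idea with slightly different bookkeeping.
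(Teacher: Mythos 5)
Your argument is correct, and its decisive step is genuinely different from the paper's. Both proofs open the same way: specialize to constant $f$ to get $\tau_2(\phi)=0$, then exploit the residual identity $(\Delta f)\tau(\phi)+2\nabla^{\phi}_{{\rm grad}\,f}\tau(\phi)=0$ for all positive $f$. Where the paper then scalarizes (it pairs the identity with $\tau(\phi)$ to get an equation on $|\tau(\phi)|^2$, invokes the existence of \emph{harmonic} coordinates, shifts them to be positive, first deduces that $|\tau(\phi)|^2$ is locally constant using $f=x^i$, and finally feeds in $f=(x^1)^2$ to reach a contradiction on a neighborhood where $\tau(\phi)\ne 0$), you instead work directly with the vector-valued identity and construct, at an arbitrary point $p$, an admissible test function with ${\rm grad}\,f(p)=0$ and $\Delta f(p)\neq 0$, namely $f=1+\chi r^{2}$ in a normal ball; since $\nabla^{\phi}_{X}\tau(\phi)$ is tensorial in $X$, the derivative term dies at $p$ and you get $\tau(\phi)(p)=0$ outright, with no contradiction argument, no harmonic coordinates, and no intermediate constancy of $|\tau(\phi)|^2$. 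Your route is more elementary (normal coordinates plus a bump function versus the harmonic-coordinate existence theorem) and pointwise, while the paper's buys nothing extra here beyond staying within the scalarized equation it sets up; your Leibniz-rule alternative, giving $\langle{\rm grad}\,f,{\rm grad}\,h\rangle\,\tau(\phi)=0$ for all positive $f,h$, is also valid since products of admissible functions are admissible. Two cosmetic points: the sign $\pm 2m$ just reflects the Laplacian convention and is harmless since only nonvanishing matters, and in the easy direction $\tau_2(\phi)$ is not literally ``assembled entirely out of covariant derivatives of $\tau(\phi)$'' --- it also contains the curvature term ${\rm Trace}_{g}R^{N}({\rm d}\phi,\tau(\phi)){\rm d}\phi$ --- but every term is linear in $\tau(\phi)$ and its derivatives, so the conclusion stands.
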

 \begin{proof}
 We only need to prove the ``if" part of the statement, i.e.,  if a map $\phi:(M^m, g)\longrightarrow (N^n,h)$ is an $f$-biharmonic map for any function $f:(M^m, g)\longrightarrow (0, \infty)$, then it is a harmonic map. To prove this is equivalent to show that if the equation
  \begin{equation}\label{fbihe}
 f\tau_2(\phi)+(\Delta f)\tau(\phi)+2\nabla^{\phi}_{{\rm grad}\, f}\tau(\phi)=0
 \end{equation}
 holds for all positive functions $f$ on $(M^m, g)$, then we have $\tau(\phi)\equiv 0$.
 
In fact, take a positive constant function $f=C$, then equation (\ref{fbihe}) implies that the map $\phi$ is  a biharmonic map, i.e., $\tau_2(\phi)\equiv 0$.
 It follows that if a map $\phi:(M^m, g)\longrightarrow (N^n,h)$ is an $f$-biharmonic maps for any function $f$, then we have
  \begin{equation}\label{A2}
(\Delta f)\tau(\phi)+2\nabla^{\phi}_{{\rm grad}\, f}\tau(\phi)=0 \;\;\forall\;f:(M^m, g)\longrightarrow (0, \infty).
 \end{equation}
 If there exists a point $p\in M$ such that $\tau(\phi)(p) \ne 0$, then there is an open neighborhood $U$ of $p$ so that $\tau(\phi)(q) \ne 0,\; \forall\; q\in U$. By taking dot product of both sides of (\ref{A2}) with $\tau(\phi)$,  we have, on $U$,
 \begin{equation}\label{A3}
 (\Delta f)|\tau(\phi)|^2+({\rm grad}\, f)|\tau(\phi)|^2=0 \;\;\forall\;f:(U, g|_U)\longrightarrow (0, \infty).
 \end{equation}
It is well known that  there exists a local harmonic coordinate system in a neighborhood of any point of a Riemannian manifold. By this well known fact and a translation in Euclidean spaces, we can choose a neighborhood $V\subset U$ of $p$ with harmonic coordinate $\{x^i\}$ satisfying $x^i>0$ for all $i=1, 2, \cdots, m$. By choosing positive harmonic coordinate functions $f=x^i$ and substituting it into (\ref{A3})  we have 
\begin{equation}
g^{ik}\frac{\partial}{\partial x^k} |\tau(\phi)|^2=0, \;\; \forall\; i=1, 2, \cdots, m.
\end{equation}

It follows that $|\tau(\phi)|^2={\rm constant}$, and hence Equation (\ref{A3}) reduces to
  \begin{equation}\label{A4}
 (\Delta f)|\tau(\phi)|^2=0 \;\;\forall\;f:(V, V|g)\longrightarrow (0, \infty).
 \end{equation}
 Now let $f=(x^1)^2$, where $x^1$ is the first coordinate function, and substitute it into Equation (\ref{A4}) to have $ (\Delta f)|\tau(\phi)|^2=2g^{11}|\tau(\phi)|^2=0$ on $V$. Since the Riemannian metric is positive definite, we conclude that $|\tau(\phi)|^2=0$ on $V$, which contradicts the assumption that $\tau(\phi)\ne 0$ on $U$. The contradiction shows that we must have $\tau(\phi)\equiv 0$ on $M$,  i.e., the map $\phi$ is a harmonic map. This completes the proof of the theorem.
\end{proof}

As an immediate consequence, we have the following characterization of minimal submanifolds using the concept of \fb submanifolds.
\begin{corollary}\label{23}
A submanifold $M^m\hookrightarrow (N^n, h)$ is minimal if and only if it is \fb for any positive function defined on the submanifold.
\end{corollary}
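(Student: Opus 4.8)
The plan is to derive this as a direct corollary of the preceding theorem (the characterization of harmonic maps). The key observation is that for an isometric immersion $\phi: M^m \hookrightarrow (N^n, h)$, the tension field $\tau(\phi)$ equals $m$ times the mean curvature vector field $H$ of the submanifold, so $\tau(\phi) \equiv 0$ if and only if $H \equiv 0$, which is exactly the definition of $M^m$ being a minimal submanifold. Thus "minimal" is simply "harmonic isometric immersion" in this language.

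First I would recall that, by definition, $M^m \hookrightarrow (N^n, h)$ being \fb for a positive function $f$ means precisely that the isometric immersion $\phi$ is an $f$-biharmonic map for that $f$. Hence $M^m$ is \fb for \emph{any} positive function defined on $M^m$ if and only if the isometric immersion $\phi$ is an $f$-biharmonic map for every positive $f: (M^m, g) \to (0,\infty)$. By the previous theorem applied to the map $\phi$, this holds if and only if $\phi$ is a harmonic map, i.e. $\tau(\phi) \equiv 0$.

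Then I would close the argument by invoking the standard identity $\tau(\phi) = m\,H$ for an isometric immersion (equivalently, the vanishing of the tension field is equivalent to the vanishing of the second fundamental form's trace). Therefore $\tau(\phi) \equiv 0$ is equivalent to $H \equiv 0$, which is the definition of $M^m$ being a minimal submanifold of $(N^n, h)$. Chaining the equivalences gives the corollary.

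There is essentially no obstacle here: the content is entirely carried by the previous theorem, and the only thing to check is the routine translation between the language of maps (harmonic, tension field) and the language of submanifolds (minimal, mean curvature). The one point worth stating explicitly is that the metric $g$ on $M^m$ in the previous theorem is allowed to be arbitrary, and in the submanifold setting it is the induced metric $g = \phi^* h$; since the previous theorem imposes no restriction on $g$, it applies verbatim. So the proof is just: unwind the definition of \fb submanifold, apply the harmonic-map characterization theorem, and use $\tau(\phi) = mH$.
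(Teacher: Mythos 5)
Your proposal is correct and matches the paper's intent: the corollary is stated there as an immediate consequence of the preceding theorem, obtained exactly by noting that a minimal submanifold is precisely a harmonic isometric immersion (via $\tau(\phi)=mH$) and applying the harmonic-map characterization to the immersion. Nothing is missing.
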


To close this section, we take a look at maps which are \fb for two different functions, especially those maps which are both biharmonic (i.e., \fb for $f=$constant) and \fb for $f\ne constant $.\\

It is shown in \cite{Ou4} that conformal immersions $\phi: \r^4\setminus\{0\}\longrightarrow \r^4$ given by the inversion of the sphere $S^3$, $\phi(x)=\frac{x}{|x|^2}$ is a biharmonic map and also an \fb map with $f(x)=|x|^4$. Our next theorem shows that this situation cannot happen for pseudo-umbilical isometric immersions.
\begin{theorem}\label{MT1}
If a pseudo-umbilical proper biharmonic submanifold $\phi: M^m\longrightarrow (N^n,h)$ is \fb, then $f$ is constant.
\end{theorem}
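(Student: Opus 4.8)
The plan is to use biharmonicity to kill all but one term of the $f$-biharmonicity equation and then exploit pseudo-umbilicity to turn that term into a condition that forces $f$ to be constant. Since $\phi$ is biharmonic we have $\tau_{2}(\phi)=0$, so equation (\ref{fbhe}) collapses to
\[
(\Delta f)\,\tau(\phi)+2\,\nabla^{\phi}_{{\rm grad}\,f}\tau(\phi)=0 .
\]
Writing $\tau(\phi)=mH$, where $H$ is the mean curvature vector field of the immersion, and dividing by $m$, this becomes $(\Delta f)H+2\nabla^{\phi}_{{\rm grad}\,f}H=0$.

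Next I would project this identity onto the tangent and normal bundles of $M$ by means of the Weingarten formula $\nabla^{\phi}_{X}H=-A_{H}X+\nabla^{\perp}_{X}H$, where $A_{H}$ is the shape operator in the direction $H$ and $\nabla^{\perp}$ is the normal connection. Pseudo-umbilicity says exactly that $A_{H}$ is proportional to the identity, and taking traces identifies the factor as $|H|^{2}$ (since ${\rm trace}\,A_{H}=\langle\tau(\phi),H\rangle=m|H|^{2}$), so $A_{H}=|H|^{2}\,{\rm Id}$. Hence the component of the displayed equation tangent to $M$ reads
\[
|H|^{2}\,{\rm grad}\,f=0 \qquad \mbox{on }M ,
\]
while the normal component, $(\Delta f)H+2\nabla^{\perp}_{{\rm grad}\,f}H=0$, will not be needed.

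It then remains to upgrade $|H|^{2}\,{\rm grad}\,f=0$ to ${\rm grad}\,f\equiv 0$. On the open set $U=\{p\in M:\,H(p)\neq 0\}$ the first factor is nonzero, so ${\rm grad}\,f=0$ on $U$. Since $\phi$ is a proper biharmonic submanifold of a connected $M$, the set $U$ is nonempty, and it is in fact dense: if $H$ vanished identically on some open subset $V$, then $\phi$ would be harmonic on $V$, but, reading the biharmonic equation (\ref{BTF}) as the second-order linear elliptic equation ${\rm Trace}_{g}(\nabla^{\phi}\nabla^{\phi}-\nabla^{\phi}_{\nabla^{M}})\tau(\phi)={\rm Trace}_{g}R^{N}({\rm d}\phi,\tau(\phi)){\rm d}\phi$ for $\tau(\phi)$, unique continuation would then force $\tau(\phi)\equiv 0$ on $M$, contradicting properness. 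Therefore ${\rm grad}\,f=0$ on the dense set $U$, hence on all of $M$ by continuity, and $f$ is constant since $M$ is connected.

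The step I expect to be the main obstacle is this last one: the computation by itself yields only $|H|^{2}\,{\rm grad}\,f=0$, which a priori permits $f$ to vary wherever $H$ vanishes, so the statement genuinely needs the fact that a proper biharmonic immersion cannot be minimal on an open subset, i.e.\ that the zero set of $H$ has empty interior. If one prefers not to invoke unique continuation, an alternative is to appeal to known structure results for pseudo-umbilical biharmonic submanifolds (for instance, that they have constant mean curvature when $\dim M\neq 4$); the unique-continuation argument, however, treats all dimensions at once and thus matches the generality of the assertion.
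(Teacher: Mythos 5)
Your proposal follows essentially the same route as the paper: biharmonicity kills $f\tau_2(\phi)$, the Weingarten formula splits $(\Delta f)\tau(\phi)+2\nabla^{\phi}_{{\rm grad}\,f}\tau(\phi)=0$ into normal and tangential parts, and pseudo-umbilicity turns the tangential part into $|H|^{2}\,{\rm grad}\,f=0$, exactly as in the paper's computation (\ref{GD1}). Your only addition is the final unique-continuation argument showing that $\{H\neq 0\}$ is dense, which is correct and in fact tightens a point the paper passes over by simply invoking non-minimality; it is a refinement of the same proof rather than a different one.
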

\begin{proof}
$\phi: M^m\longrightarrow (N^n,h)$ be a pseudo-umbilical submanifold, then, by definition, its shape operator with respect to the mean curvature vector field $\eta$ is given by
\begin{equation}
A_{\eta}(X)=\langle \eta, \eta\rangle X
\end{equation}
for  any vector field  $X$ tangent to the submanifold. It is well known that the tension field of the submanifold is given by $\tau(\phi)=m\eta$. If the submanifold is both biharmonic and \fb, then we have both $\tau_2(\phi)=0$ and $\tau_{2,f} (\phi)=0$ identically. Substituting these into Equation (\ref{fbhe}) we obtain
\begin{eqnarray}\label{GD1}
0 &=& (\Delta f)\tau(\phi)+2\nabla^{N}_{{\rm grad}\, f}\tau(\phi)\\\notag
&=& m [(\Delta f)\eta+2\nabla^{N}_{{\rm grad}\, f}\eta]\\\notag
&=& m [(\Delta f)\eta+2\nabla^{\perp}_{{\rm grad}\, f}\eta-2A_{\eta}(\rm{grad}\,f)]\\\notag
&=& m [(\Delta f)\eta+2\nabla^{\perp}_{{\rm grad}\, f}\eta-2\langle \eta, \eta\rangle\,\rm{grad}\,f],
\end{eqnarray}
where the third equality was obtained by using the Weingarten formula for submanifolds and the last equality was obtained by the assumption that the submanifold is pseudo-umbilical. Comparing the normal and the tangent components of the both sides of (\ref{GD1}) we have $\rm{grad}\,f=0$ since the submanifold is non-minimal, and hence $f=\rm constant$. This completes the proof of the theorem.
\end{proof}
\begin{corollary}
 Biharmonic hypersurface $\phi: S^m(\frac{1}{\sqrt{2}})\hookrightarrow S^{m+1},\; \phi(x)=(\frac{1}{\sqrt{2}}, x)$ is an \fb hypersurface if and only if $f$ is constant. In particular, there is no nonconstant function $f$ on $S^2(\frac{1}{\sqrt{2}})$ so that the conformal immersion $\phi: (S^2(\frac{1}{\sqrt{2}}), f^{-1}g_0)\longrightarrow (S^3, h_0),\; \phi(x)=(\frac{1}{\sqrt{2}}, x)$ becomes a biharmonic map, where $g_0$ and $h_0$ are the metrics on $S^2(\frac{1}{\sqrt{2}})$ and $S^3$ induced from their ambient Euclidean spaces respectively.
\end{corollary}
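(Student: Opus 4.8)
The plan is to obtain this corollary as an immediate application of Theorem \ref{MT1}, so that the real content reduces to checking that the small hypersphere $\phi\colon S^m(\frac{1}{\sqrt 2})\hookrightarrow S^{m+1}$, $\phi(x)=(\frac{1}{\sqrt2},x)$, satisfies the two hypotheses of that theorem: it is pseudo-umbilical, and it is a \emph{proper} biharmonic submanifold. For the first point I would note that a hypersphere of this form, sitting at constant distance from a totally geodesic $S^m\subset S^{m+1}$, is totally umbilical: all of its principal curvatures coincide, so its shape operator in every normal direction—in particular $A_\eta$ with respect to the mean curvature vector field $\eta$—is a multiple of the identity, which is precisely the pseudo-umbilical condition. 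For the second point I would recall the classical computation identifying $S^m(\frac{1}{\sqrt2})$ as the standard example of a proper biharmonic hypersurface of $S^{m+1}$: it has constant mean curvature, its shape operator satisfies $|A|^2=m$ (which is the biharmonicity condition for a CMC hypersurface of the unit sphere), and it is non-minimal since its mean curvature is $1\neq 0$.

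Granting these facts, the ``only if'' direction is immediate: if the biharmonic hypersurface $\phi$ is in addition \fb, then Theorem \ref{MT1} applies verbatim and forces $f$ to be constant. The ``if'' direction is trivial, since by (\ref{fbhe}) an \fb map with $f$ a positive constant is nothing but a biharmonic map, and $\phi$ is biharmonic; this establishes the first assertion. For the ``in particular'' part I would invoke Theorem A with $M^2=S^2(\frac{1}{\sqrt2})$, $g=g_0$, $N^n=S^3$, $h=h_0$: the conformal immersion $\phi\colon(S^2(\frac{1}{\sqrt2}),f^{-1}g_0)\longrightarrow(S^3,h_0)$ is a biharmonic map if and only if $\phi\colon(S^2(\frac{1}{\sqrt2}),g_0)\longrightarrow(S^3,h_0)$ is \fb. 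By the case $m=2$ of the statement just proved, the latter occurs only when $f$ is constant; hence no nonconstant $f$ can make the conformal immersion biharmonic.

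I do not expect a genuine obstacle here—the argument is essentially a translation through Theorem \ref{MT1} and Theorem A. The only step deserving a little care is the verification that $\phi$ is \emph{proper} biharmonic, i.e., fixing the normalization of the biharmonic hypersurface equation in $S^{m+1}$ so that the radius $\frac{1}{\sqrt2}$ (and not some other value) is the one yielding $|A|^2=m$; but this is entirely standard and may simply be quoted from the literature on biharmonic submanifolds.
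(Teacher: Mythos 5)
Your proposal is correct and follows essentially the same route as the paper: the first statement via Theorem \ref{MT1} together with the known fact (from Caddeo--Montaldo--Oniciuc) that $S^m(\tfrac{1}{\sqrt 2})$ is a totally umbilical (hence pseudo-umbilical) proper biharmonic hypersurface of $S^{m+1}$, and the second statement via Theorem A. The extra details you supply (umbilical $\Rightarrow$ pseudo-umbilical, $|A|^2=m$, $H=1\neq 0$) are just the verification the paper delegates to the cited reference.
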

\begin{proof}
The first statement of the corollary follows from Theorem \ref{MT1} and the fact that the hypersurface $\phi: S^m(\frac{1}{\sqrt{2}})\hookrightarrow S^{m+1},\; \phi(x)=(\frac{1}{\sqrt{2}}, x)$ is a totally umbilical biharmonic hypersurface of $S^{m+1}$ (See \cite{CMO1}). The second statement of the corollary follows from the first statement and Theorem A.
\end{proof}
\section{ The equations  of \fb hypersurfaces and some classification results }\label{Sect3}
In this section, we will give an improved equation for \fb hypersurfaces in a general Riemannian manifold and use it to prove some rigidity theorems and  some classifications of \fb hypersurfaces in Einstein spaces, including space forms.
\begin{theorem}\label{FBH}
A hypersurface $\phi:M^{m}\longrightarrow (N^{m+1}, h)$  with the mean curvature vector field $\eta=H\xi$ is  $f$-biharmonic if and only if:
\begin{equation}\label{fbh2}
\begin{cases}
\Delta (f\,H)-(f\,H )[|A|^{2}-{\rm
Ric}^N(\xi,\xi)]=0,\\
A\,({\rm grad}(f\,H)) +(f\,H)[\frac{m}{2} {\rm grad}\, H
-({\rm Ric}^N\,(\xi))^{\top}]=0,
\end{cases}
\end{equation}
where ${\rm Ric}^N : T_qN\longrightarrow T_qN$ denotes the Ricci
operator of the ambient space defined by $\langle {\rm Ric}^N\, (Z),
W\rangle={\rm Ric}^N (Z, W)$, $A$ is the shape operator of the
hypersurface with respect to the unit normal vector $\xi$, and $\Delta, \rm grad$ are the Laplace and the gradient operator of the hypersurface respectively.
\end{theorem}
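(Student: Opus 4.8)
The plan is to specialise the $f$-biharmonic map equation (\ref{fbhe}) to an isometric immersion $\phi:M^m\to(N^{m+1},h)$ and then separate the resulting identity into its components normal and tangent to the hypersurface. Since $\phi$ is an isometric immersion, $\tau(\phi)=m\eta=mH\xi$, and the bitension field $\tau_2(\phi)$ of a hypersurface is known to decompose---this is precisely the biharmonic hypersurface equation (see, e.g., \cite{Ou5})---into a normal part equal to $m\left[\Delta H-H\left(|A|^{2}-{\rm Ric}^N(\xi,\xi)\right)\right]$ and a tangential part equal (with the normalisations and signs of the paper's conventions) to $-m\left[2A({\rm grad}\,H)+\frac{m}{2}{\rm grad}\,H^{2}-2H({\rm Ric}^N(\xi))^{\top}\right]$. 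One may either quote this, or re-derive it from $\tau_2(\phi)={\rm Trace}_g(\nabla^{\phi}\nabla^{\phi}-\nabla^{\phi}_{\nabla^M})\tau(\phi)-{\rm Trace}_g R^N({\rm d}\phi,\tau(\phi)){\rm d}\phi$ using the Gauss, Weingarten and contracted Codazzi equations together with the contraction ${\rm Trace}_g R^N({\rm d}\phi,\xi){\rm d}\phi=-{\rm Ric}^N(\xi)$ for the curvature convention fixed after (\ref{BTF}).

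Next I would compute the two remaining terms of (\ref{fbhe}). The term $(\Delta f)\tau(\phi)=m(\Delta f)H\,\xi$ is purely normal. For $2\nabla^{\phi}_{{\rm grad}\,f}\tau(\phi)$ I would use that ${\rm grad}\,f$ is tangent to $M$ and the Weingarten formula $\nabla^{N}_{X}\xi=-A(X)$ (with $\nabla^{\perp}_{X}\xi=0$, since $\xi$ is a unit section of the line bundle $T^{\perp}M$), obtaining $2\nabla^{\phi}_{{\rm grad}\,f}\tau(\phi)=2m\left[\langle{\rm grad}\,f,{\rm grad}\,H\rangle\,\xi-H\,A({\rm grad}\,f)\right]$, which has one normal and one tangential piece. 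Substituting the three contributions into (\ref{fbhe}) and using that a section of $\phi^{-1}TN$ vanishes iff its normal and tangential parts both vanish, I get (after dividing the normal identity by $m$ and the tangential one by $-m$) the scalar equation
\[
f\Delta H+(\Delta f)H+2\langle{\rm grad}\,f,{\rm grad}\,H\rangle-(fH)\left(|A|^{2}-{\rm Ric}^N(\xi,\xi)\right)=0
\]
and the tangential vector equation
\[
2f\,A({\rm grad}\,H)+2H\,A({\rm grad}\,f)+\tfrac{m}{2}\,f\,{\rm grad}\,H^{2}-2fH\,({\rm Ric}^N(\xi))^{\top}=0 .
\]

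The final step---which is what produces the improved form of the equation, compared with the version in \cite{Ou4}---is the observation that all of the $f$-dependence collects into the single function $fH$. For the Laplacian $\Delta$ appearing in (\ref{BTF})--(\ref{fbhe}) one has the Leibniz rule $\Delta(fH)=f\Delta H+H\Delta f+2\langle{\rm grad}\,f,{\rm grad}\,H\rangle$, which turns the first three terms of the scalar equation into $\Delta(fH)$, so it becomes $\Delta(fH)-(fH)\left(|A|^{2}-{\rm Ric}^N(\xi,\xi)\right)=0$; and since the shape operator $A$ is a $(1,1)$-tensor, $2f\,A({\rm grad}\,H)+2H\,A({\rm grad}\,f)=2A(f\,{\rm grad}\,H+H\,{\rm grad}\,f)=2A({\rm grad}(fH))$, while ${\rm grad}\,H^{2}=2H\,{\rm grad}\,H$ rewrites $\tfrac{m}{2}f\,{\rm grad}\,H^{2}$ as $mfH\,{\rm grad}\,H$; dividing by $2$, the tangential equation becomes $A({\rm grad}(fH))+(fH)\left[\tfrac{m}{2}{\rm grad}\,H-({\rm Ric}^N(\xi))^{\top}\right]=0$. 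These are exactly the two equations of (\ref{fbh2}) (and for $f$ constant they reduce, as they must, to the biharmonic hypersurface equation). Since each step above is an equivalence, running the argument in reverse shows that (\ref{fbh2}) conversely forces $\tau_{2,f}(\phi)=0$, establishing both directions of the stated equivalence.

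I do not expect a genuine obstacle here: once the decomposition of the bitension field of a hypersurface is in hand, the rest is a short computation whose one delicate point is the bookkeeping of sign conventions---in particular, that $\Delta$ denotes the trace-of-Hessian Laplacian (so that its Leibniz rule carries a $+2\langle{\rm grad}\,f,{\rm grad}\,H\rangle$ cross term, matching the normal contribution of $2\nabla^{\phi}_{{\rm grad}\,f}\tau(\phi)$), and that the Ricci terms appear with the signs dictated by the curvature convention of (\ref{BTF}). If one prefers not to invoke the known biharmonic hypersurface equation, the only additional work is re-deriving the normal/tangential split of $\tau_2(\phi)$ from the formula in (\ref{BTF}) via Gauss, Weingarten and the contracted Codazzi equation and the contraction $\sum_i R^N(e_i,\xi)e_i=-{\rm Ric}^N(\xi)$; this is where the $|A|^{2}$, ${\rm grad}\,H^{2}$ and $({\rm Ric}^N(\xi))^{\top}$ terms originate.
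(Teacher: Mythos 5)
Your proof is correct; the only difference from the paper is where you start. The paper's own proof is a two-line argument: it quotes from \cite{Ou4} the known $f$-biharmonic hypersurface system (\ref{fbh}) (the one with the terms $H(\Delta f)/f$, $2({\rm grad}\ln f)H$ and $HA({\rm grad}\ln f)$) and simply multiplies both equations by $f$, the Leibniz rules $\Delta(fH)=f\Delta H+H\Delta f+2\langle{\rm grad}\,f,{\rm grad}\,H\rangle$ and $A({\rm grad}(fH))=fA({\rm grad}\,H)+HA({\rm grad}\,f)$ doing the rest. You instead re-derive that intermediate system from the $f$-biharmonic map equation (\ref{fbhe}): you take the known normal/tangential decomposition of $\tau_2(\phi)$ (the biharmonic hypersurface equation (\ref{BHS})), compute $(\Delta f)\tau(\phi)$ and $2\nabla^{\phi}_{{\rm grad}\,f}\tau(\phi)$ via Weingarten, and split into components before performing exactly the same absorption of $f$ into $fH$. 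Your sign bookkeeping is consistent: dividing your two intermediate equations by $f$ (respectively $2f$) recovers precisely (\ref{fbh}), so the relative signs between the bitension part and the lower-order $f$-terms are right, and each step is reversible, giving both directions of the equivalence. What your route buys is self-containedness (no appeal to the $f$-biharmonic hypersurface equation of \cite{Ou4}, only to the biharmonic one); what the paper's route buys is brevity, since the quoted equation (\ref{fbh}) already is the output of the computation you carry out.
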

\begin{proof}
It was proved in \cite{Ou4} that an isometric immersion $\phi:M^{m}\longrightarrow N^{m+1}$
 with mean curvature vector $\eta=H\xi$ is $f$-biharmonic if and only if:
\begin{equation}\label{fbh}
\begin{cases}
\Delta H-H |A|^{2}+H{\rm
Ric}^N(\xi,\xi)+H(\Delta f)/f+2({\rm grad} \ln f) H=0,\\
A\,({\rm grad}\,H) +\frac{m}{2}H {\rm grad}\, H
-\, H \,({\rm Ric}^N\,(\xi))^{\top}+H A\,({\rm grad}\,\ln f)=0,
\end{cases}
\end{equation}
where $\xi$, $A$, and $H$ are the unit normal vector field, the
shape operator, and the mean curvature function  of
the hypersurface $\varphi(M)\subset (N^{m+1}, h)$ respectively, and the
operators $\Delta,\; {\rm grad}$ and $|,|$ are taken with respect to
the induced metric $g=\varphi^{*}h=\lambda^{2}{\bar g}$ on the
hypersurface.\\

Multiplying $f$ to both sides of each equation in the system (\ref{fbh}) we can rewrite the resulting system as (\ref{fbh2}).
\end{proof}

As a straightforward consequence, we have to following equations for  \fb \\hypersurfaces in Einstein spaces and space forms, which improves the equations given in \cite {PA}, and \cite{Ou4} respectively.
\begin{corollary}
A hypersurface $\phi:M^{m}\longrightarrow (N^{m+1}, h)$ in an Einstein space with ${\rm Ric}^N=\lambda h$  is  $f$-biharmonic if and only if it mean curvature function $H$ solves the following PDEs
\begin{equation}\label{E1}
\begin{cases}
\Delta (f\,H)-(f\,H )[|A|^{2}-\lambda]=0,\\
A\,({\rm grad}(f\,H)) +\frac{m}{2}(f\,H) {\rm grad}\, H
=0.
\end{cases}
\end{equation}
In particular, a hypersurface $\phi:M^{m}\longrightarrow N^{m+1}(C)$ in
a space form of constant sectional curvature $C$ is $f$-biharmonic if and only if its mean curvature function $H$ is a solution of
\begin{equation}\label{fbh3}
\begin{cases}
\Delta (f\,H)-(f\,H) [|A|^{2}-mC]=0,\\
A\,({\rm grad}(f\,H)) +\frac{m}{2}(f\,H){\rm grad}\, H
=0.
\end{cases}
\end{equation}
\end{corollary}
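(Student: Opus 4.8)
The plan is to obtain both systems as direct specializations of the improved $f$-biharmonic hypersurface equation (\ref{fbh2}) of Theorem \ref{FBH}, first imposing the Einstein condition on the ambient space and then the constant sectional curvature condition.

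First I would recall that in an Einstein manifold $(N^{m+1},h)$ with $\mathrm{Ric}^N=\lambda h$ the Ricci operator acts as a scalar, $\mathrm{Ric}^N(Z)=\lambda Z$ for every $Z\in T_qN$. Taking $Z=\xi$, the unit normal field of the hypersurface, this gives $\mathrm{Ric}^N(\xi,\xi)=\lambda\,h(\xi,\xi)=\lambda$ and $\mathrm{Ric}^N(\xi)=\lambda\xi$, so that $(\mathrm{Ric}^N(\xi))^{\top}=\lambda\,\xi^{\top}=0$ since $\xi$ is orthogonal to $TM$. Substituting these two identities into the two equations of the system (\ref{fbh2}) replaces $|A|^{2}-\mathrm{Ric}^N(\xi,\xi)$ by $|A|^{2}-\lambda$ and causes the term $(\mathrm{Ric}^N(\xi))^{\top}$ to vanish; the resulting system is exactly (\ref{E1}). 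The ``if and only if'' is inherited verbatim from Theorem \ref{FBH}, since we have only rewritten equivalent equations.

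For the space form statement, I would use the fact that $N^{m+1}(C)$ is Einstein: a Riemannian manifold of constant sectional curvature $C$ and dimension $n$ satisfies $\mathrm{Ric}^N=(n-1)C\,h$, so here $n=m+1$ yields $\mathrm{Ric}^N=mC\,h$, i.e. $\lambda=mC$. Feeding $\lambda=mC$ into (\ref{E1}) produces (\ref{fbh3}) immediately.

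Since both steps amount to substitution into an equation already established, I do not anticipate any genuine obstacle. The only points deserving a moment's care are the normalization of the Ricci tensor of a space form of curvature $C$ in dimension $m+1$ (the coefficient is $mC$, not $(m+1)C$ or $C$) and the vanishing of the tangential part of $\xi$, which is immediate from $\xi$ being a unit normal to the hypersurface.
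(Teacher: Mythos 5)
Your proposal is correct and follows exactly the route the paper intends: the paper states this corollary as a straightforward consequence of Theorem \ref{FBH}, obtained by substituting ${\rm Ric}^N(\xi,\xi)=\lambda$, $({\rm Ric}^N(\xi))^{\top}=0$, and then $\lambda=mC$ for a space form. Your care about the coefficient $mC$ and the vanishing tangential part is exactly the right check, and nothing further is needed.
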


As the first application of the improved \fb hypersurface equation, we will prove the following theorem, which gives a rigidity result of \fb \\isometric immersions under some constraints of Ricci curvatures of the ambient space. Let us recall the following lemma, which will be used in proving our next theorem.\\

{\bf Lemma A.} (\cite{NU}) Assume that $(M, g)$ is a complete non-compact Riemannian manifold, and $\alpha$ is a non-negative smooth function on $M$. Then, every smooth $L^2$ function $u$ on $M$ satisfying the Schr$\ddot{\rm o}$dinger type equation
\begin{equation}\label{nu}
\Delta_g u = \alpha u	
\end{equation}
on $M$ is  a constant.\\
\begin{theorem}\label{Ric}
A complete hypersurface $\phi:M^{m}\longrightarrow (N^{m+1}, h)$ of a manifold $N$  with  ${\rm Ric}^N(\xi,\xi)\le |A|^2$ and the mean curvature vector field $\eta=H\xi$ satisfying $\int_M (fH)^2dv_g<\infty$ is $f$-biharmonic if and only if it is minimal, or
\begin{equation}\label{HP2}
\begin{cases}
{\rm Ric}^N(\xi,\xi)=|A|^{2},\\
({\rm Ric}^N\,(\xi))^{\top}=\frac{m}{2} {\rm grad}\, H,\;\;{\rm or}\;\; H=0.
\end{cases}
\end{equation}
In particular, a complete \fb hypersurface $\phi:M^{m}\longrightarrow (N^{m+1}, h)$ of a manifold $N$  of nonpositive Ricci curvature  and the mean curvature function satisfying $\int_M (fH)^2dv_g<\infty$ is minimal.
\end{theorem}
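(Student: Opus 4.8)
The plan is to read everything off the improved $f$-biharmonic hypersurface system (\ref{fbh2}) of Theorem \ref{FBH}, exploiting that its first equation is a Schr\"odinger-type equation of exactly the form covered by Lemma A. The substance of the statement is the forward implication, that a complete $f$-biharmonic hypersurface satisfying the curvature and integrability hypotheses is either minimal or obeys (\ref{HP2}), together with the ``in particular'' conclusion; the reverse implication follows by reversing the substitutions made below.

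To prove the forward implication, assume $\phi$ is $f$-biharmonic, so that (\ref{fbh2}) holds, and set $u := fH$ and $\alpha := |A|^{2}-{\rm Ric}^N(\xi,\xi)$. The curvature hypothesis ${\rm Ric}^N(\xi,\xi)\le|A|^2$ says precisely that $\alpha$ is a non-negative (smooth) function on $M$, and the first equation of (\ref{fbh2}) then reads $\Delta u=\alpha u$; the hypothesis $\int_M(fH)^2\,dv_g<\infty$ says $u\in L^2(M)$. If $M$ is non-compact, Lemma A applies verbatim and gives that $u=fH$ is constant. If $M$ is compact, the same follows by multiplying $\Delta u=\alpha u$ by $u$ and integrating over $M$: since $\int_M u\,\Delta u\,dv_g=-\int_M|{\rm grad}\,u|^2\,dv_g$ while $\int_M\alpha u^2\,dv_g\ge 0$, we must have ${\rm grad}\,u\equiv 0$. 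Either way $fH\equiv c$ for a constant $c$.

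Now I would split on the value of $c$. If $c=0$ then $H\equiv 0$, since $f>0$, and $\phi$ is minimal. If $c\ne 0$ then $H$ is nowhere zero and ${\rm grad}(fH)\equiv 0$, $\Delta(fH)\equiv 0$; substituting this into the two equations of (\ref{fbh2}) and cancelling the nowhere-vanishing factor $fH=c$ gives $|A|^2={\rm Ric}^N(\xi,\xi)$ from the first equation and $({\rm Ric}^N(\xi))^{\top}=\frac{m}{2}{\rm grad}\,H$ from the second, which is exactly (\ref{HP2}). This establishes the forward implication.

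Finally, for the ``in particular'' statement, assume $N$ has nonpositive Ricci curvature. Then ${\rm Ric}^N(\xi,\xi)\le 0\le|A|^2$, so the curvature hypothesis of the theorem holds automatically, and the hypersurface is either minimal or satisfies (\ref{HP2}). In the latter case the first line of (\ref{HP2}) forces $|A|^2={\rm Ric}^N(\xi,\xi)\le 0$, whence $|A|\equiv 0$, i.e. $A\equiv 0$; the hypersurface is then totally geodesic and in particular minimal, so it is minimal in all cases. I do not expect a serious obstacle in this argument; the only points requiring care are that Lemma A is stated for complete \emph{non-compact} manifolds, so the compact case must be disposed of by the one-line integration-by-parts argument above, and that one should check $u=fH$ and $\alpha$ are smooth (which they are, since $f$, $\phi$ and $N$ are smooth) for Lemma A to apply.
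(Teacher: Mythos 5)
Your proposal is correct and follows essentially the same route as the paper: read the first equation of (\ref{fbh2}) as the Schr\"odinger-type equation $\Delta(fH)=\alpha\,(fH)$ with $\alpha=|A|^{2}-{\rm Ric}^N(\xi,\xi)\ge 0$, conclude $fH$ is constant via Lemma A (the paper disposes of the compact case by noting $(fH)^2$ is subharmonic rather than by your direct integration by parts, a cosmetic difference), and then split on whether the constant vanishes. Your case analysis yielding (\ref{HP2}) and the totally-geodesic contradiction for the nonpositive-Ricci statement match the paper's argument.
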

\begin{proof}
A straightforward computation yields
\begin{eqnarray}\label{GD3}
\Delta (fH)^2= 2fH\Delta (fH)+2 |\nabla (fH)|^2.
\end{eqnarray}

If the biharmonic hypersurface $\phi:M^{m}\longrightarrow (N^{m+1}, h)$ is compact, then, by Theorem \ref{FBH} we have Equation (\ref{fbh2}). Substituting the first equation of (\ref{fbh2}) into (\ref{GD3}) we have
\begin{eqnarray}\label{GD30}
\Delta (fH)^2= 2(fH)^2[|A|^2-{\rm Ric}^N(\xi,\xi)]+2 |\nabla (fH)|^2\ge 0
\end{eqnarray}
by the assumption that ${\rm Ric}^N(\xi,\xi)\le |A|^2$. It follows that the function $fH$ is a subharmonic function on a compact manifold, and hence it is constant.\\

If the biharmonic hypersurface $\phi:M^{m}\longrightarrow (N^{m+1}, h)$ is non-compact but complete, then the first equation of (\ref{fbh2}) implies that the function $fH$ is a solution of (\ref{nu}) with $\alpha=|A|^2-{\rm Ric}^N(\xi,\xi)$ being nonnegative. Using Lemma A we conclude that $fH$ is constant. So, in either case, the function $fH=C$, a constant. Substituting this into (\ref{fbh2}) we obtain the first statement of the theorem.\\

To prove the second statement, notice that $fH={\rm constant}$ and $f$ being positive imply that  we either have (i) $H\equiv 0$ in which case the hypersurface is minimal, or (ii) $H$ is never zero. If case (ii) happens, then we use $fH={\rm constant}\ne 0$ and the first equation \fb hypersurface equation to conclude that $0\le |A|^2={\rm Ric}^N(\xi,\xi)\le 0$. This implies that $A=0$ and hence the hypersurface is totally geodesic. It follows that $H\equiv 0$, which contradicts the assumption that $H$ is never zero. The contradiction shows that we only have case (i), which gives the second statement.
\end{proof}
From the proof of Theorem \ref{Ric} we have the following
\begin{corollary}
(i) A compact hypersurface $\phi:M^{m}\longrightarrow S^{m+1}$ with squared norm of the second fundamental form $|A|^2\ge m$ is $f$-biharmonic if and only if  it is biharmonic with  constant mean curvature and $|A|^2=m$.
(ii)  a complete hypersurface $\phi:M^{m}\longrightarrow S^{m+1}$ of the sphere $S^{m+1}$  with the mean curvature vector field $\eta=H\xi$ satisfying $ |A|^2\ge m$ and $\int_M (fH)^2dv_g<\infty$ is $f$-biharmonic if and only if the hypersurface is a biharmonic hypersurface of constant mean curvature and $|A|^2=m$.
\end{corollary}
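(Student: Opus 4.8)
The proof is a specialisation of Theorem~\ref{Ric} and of the computations in its proof to the target $N^{m+1}=S^{m+1}$. Since $S^{m+1}$ has constant sectional curvature $1$ it is Einstein with ${\rm Ric}^{N}=m\,h$, so that ${\rm Ric}^{N}(\xi,\xi)=m$ and $({\rm Ric}^{N}(\xi))^{\top}=0$ for the unit normal $\xi$. Hence the standing hypothesis $|A|^{2}\ge m$ is exactly the condition ${\rm Ric}^{N}(\xi,\xi)\le |A|^{2}$ of Theorem~\ref{Ric}, and the $f$-biharmonic hypersurface system reduces to (\ref{fbh3}) with $C=1$, i.e. to $\Delta(fH)-(fH)(|A|^{2}-m)=0$ together with $A({\rm grad}(fH))+\frac{m}{2}(fH){\rm grad}\,H=0$. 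The ``if'' direction of both (i) and (ii) is immediate, since a biharmonic immersion is $f$-biharmonic for the positive constant $f\equiv 1$, so it suffices to treat the ``only if'' direction.

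The first step is to prove that $fH$ is constant, by the two arguments already appearing in the proof of Theorem~\ref{Ric}. In the compact case (i), substituting the first equation of (\ref{fbh3}) into the Bochner-type identity (\ref{GD3}) gives $\Delta(fH)^{2}=2(fH)^{2}(|A|^{2}-m)+2|\nabla(fH)|^{2}\ge 0$, so $(fH)^{2}$, and hence $fH$, is constant on the closed manifold $M$. In the complete non-compact case (ii), the first equation of (\ref{fbh3}) presents $u=fH$ as an $L^{2}$ solution of the Schr\"odinger-type equation (\ref{nu}) with non-negative potential $\alpha=|A|^{2}-m$, and Lemma~A again forces $fH\equiv C$ for a constant $C$.

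The second step reads off the geometry from $fH\equiv C$, exactly as at the end of the proof of Theorem~\ref{Ric}. Since $f>0$, either $C=0$, whence $H\equiv 0$ and $\phi$ is minimal, or $C\ne 0$, whence $H=C/f$ vanishes nowhere; in the latter case the first equation of (\ref{fbh3}) yields $0=C(|A|^{2}-m)$, so $|A|^{2}\equiv m$, and the second equation yields $\frac{m}{2}\,C\,{\rm grad}\,H=0$, so $H$ is constant and $f=C/H$ is constant. Thus in the non-minimal branch $\phi$ is biharmonic, has constant mean curvature, and satisfies $|A|^{2}=m$.

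The step needing care is the null branch $C=0$: there $\phi$ is merely minimal, and the hypothesis $|A|^{2}\ge m$ has not been used and in general does not give $|A|^{2}=m$ --- for instance the $m$-dimensional minimal isoparametric hypersurfaces of $S^{m+1}$ with $g>2$ distinct principal curvatures satisfy $|A|^{2}=(g-1)m>m$. So the clean formulation keeps ``minimal'' as a separate alternative, in exact parallel with Theorem~\ref{Ric}: $\phi$ is $f$-biharmonic if and only if it is minimal, or biharmonic with constant mean curvature and $|A|^{2}=m$. With that caveat the argument is complete; all the analytic content was already supplied by Lemma~A and the Bochner identity in the proof of Theorem~\ref{Ric}, and only the elementary algebra above remains.
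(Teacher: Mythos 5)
Your proof is correct and is exactly the argument the paper intends: the corollary is stated with the remark that it follows ``from the proof of Theorem~\ref{Ric}'', and your specialization to $S^{m+1}$ (${\rm Ric}^N(\xi,\xi)=m$, $({\rm Ric}^N(\xi))^{\top}=0$, then $fH={\rm constant}$ via subharmonicity on a compact $M$ or via Lemma~A in the complete $L^2$ case, then reading off the two branches from (\ref{fbh3})) is precisely that proof. Your caveat about the null branch is also well taken: a minimal hypersurface is $f$-biharmonic for every positive $f$, and examples such as the Cartan minimal isoparametric hypersurface in $S^4$ satisfy $|A|^2=(g-1)m>m$, so the conclusion should be read, as in Theorem~\ref{Ric}, with ``minimal'' retained as a separate alternative rather than absorbed into the condition $|A|^2=m$; this is a (minor) imprecision in the paper's statement, not a gap in your argument.
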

As the second application of the improved \fb hypersurface equation, we give some classifications of \fb hypersurfaces in Einstein spaces and space forms.
\begin{theorem}
A compact  hypersurface of an Einstein space with  nonpositive scalar curvature is $f$-biharmonic if and only if it is a minimal hypersurface.
\end{theorem}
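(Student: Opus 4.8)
The plan is to read this off from the Einstein-space $f$-biharmonic equation (\ref{E1}) together with a maximum-principle argument of the same type used in the proof of Theorem \ref{Ric}. Write ${\rm Ric}^N=\lambda h$ for the ambient Einstein metric; since the scalar curvature of $N^{m+1}$ equals $(m+1)\lambda$, the hypothesis of nonpositive scalar curvature means $\lambda\le 0$, and hence the function $\alpha:=|A|^2-\lambda$ is everywhere nonnegative on the hypersurface $M^m$. The converse direction is free: any minimal submanifold is $f$-biharmonic for every positive function $f$, as already noted in the introduction.

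For the forward direction, I would first invoke (\ref{E1}): an $f$-biharmonic hypersurface of the Einstein space satisfies $\Delta(fH)=(fH)\,\alpha$ with $\alpha\ge 0$. Then, exactly as in the computation leading to (\ref{GD30}), I compute
\[
\Delta (fH)^2=2fH\,\Delta(fH)+2|\nabla(fH)|^2=2(fH)^2\alpha+2|\nabla(fH)|^2\ge 0,
\]
so $(fH)^2$ is a subharmonic function on the compact manifold $M^m$ and is therefore constant. Substituting back into the displayed identity forces both $|\nabla(fH)|\equiv 0$ (so that $fH\equiv C$ for some constant $C$) and $(fH)^2\alpha\equiv 0$ pointwise.

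Next I would split into cases according to $C$. Because $f>0$, if $C=0$ then $H\equiv 0$ and $M$ is minimal, which is the assertion. If $C\ne 0$ then $H$ never vanishes, so $(fH)^2\alpha\equiv 0$ gives $\alpha\equiv 0$, i.e. $|A|^2=\lambda$; but $|A|^2\ge 0$ and $\lambda\le 0$ together yield $|A|^2=\lambda=0$, whence the hypersurface is totally geodesic and in particular $H\equiv 0$, contradicting $C\ne 0$. Thus only the case $C=0$ can occur, and the hypersurface is minimal. (Equivalently, one may cite Theorem \ref{Ric} directly: compactness supplies $\int_M(fH)^2dv_g<\infty$, and $\lambda\le 0\le|A|^2$ gives ${\rm Ric}^N(\xi,\xi)\le|A|^2$, while the alternative (\ref{HP2}) would force $\lambda=|A|^2\ge 0$, hence again $\lambda=|A|^2=0$ and the immersion is totally geodesic.)

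I do not anticipate a genuine obstacle here; the only points to watch are that the subharmonic-function step is being used on a closed manifold, where "subharmonic $\Rightarrow$ constant" is valid, and that the positivity $f>0$ is what makes the passage from $fH\equiv C$ to the dichotomy on $H$ legitimate. If one prefers to avoid quoting the maximum principle, the same conclusion follows by multiplying $\Delta(fH)=(fH)\alpha$ by $fH$ and integrating by parts over the closed manifold $M$, which gives $-\int_M|\nabla(fH)|^2=\int_M(fH)^2\alpha\ge 0$ and hence both integrals vanish.
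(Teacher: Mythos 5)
Your proposal is correct and follows essentially the same route as the paper: expand $\Delta(fH)^2$, use the first equation of (\ref{E1}) with $\lambda\le 0$ to see $(fH)^2$ is subharmonic on the compact hypersurface, hence constant, then split on whether the constant vanishes and derive a contradiction in the nonzero case from $|A|^2=\lambda\le 0$. The only (harmless) difference is that you read $|A|^2=\lambda$ directly from the pointwise vanishing of $(fH)^2\alpha$, whereas the paper first invokes the second equation of (\ref{E1}) to get $H$ constant before reaching the same conclusion.
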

\begin{proof}
First of all, by Corollary \ref{23},  a minimal hypersurface $\phi:M^{m}\longrightarrow (N^{m+1}, h)$  is  $f$-biharmonic for any positive function $f$ on $M$. Conversely, let $\phi:M^{m}\longrightarrow (N^{m+1}, h)$ be a compact $f$-biharmonic hypersurface with the mean curvature vector field $\eta=H\xi$ on an Einstein manifold. Then, one can easily compute that
\begin{eqnarray}\label{NP}
\Delta((fH)^2)&=& 2(fH)\Delta(fH)+2|{\rm grad}(fH)|^2.
\end{eqnarray}
Using the first  equation of  (\ref{E1}) with $\lambda= \frac{{\rm R}_N}{m+1}$ and the assumption  on the scalar curvature ${\rm R}_N$ we have
\begin{eqnarray}\label{NP1}
\Delta((fH)^2)= 2(fH)^2[|A|^{2}-\frac{{\rm R}_N}{m+1}]+2|{\rm grad}(fH)|^2\ge 0.
\end{eqnarray}
It follows that $(fH)^2$ is a subharmonic function on a compact manifold and hence it is a constant. This implies that $fH=C$, a constant, from which and the fact the $f$ is a positive function we conclude that either (i) $H\equiv 0$, in which case the hypersurface in minimal, or (ii) $H(p)\ne 0$ for any $p\in M^m$. Now if case (ii) happens, then we $fH=C\ne 0$ and the second equation of the $f$-biharmonic equation of Einstein space to conclude that  $H$ is a nonzero constant. Using this and the first equation of (\ref{E1}) again we have $ 0\le |A|^2=\frac{{\rm R}_N}{m+1}\le 0$ and hence $A=0$. This means the hypersurface is actually totally geodesic and hence $H\equiv 0$, a contradiction to the assumption that $H(p)\ne 0$ for any $p\in M^m$. The contradiction shows that case (ii) cannot happen. Thus, we obtain the theorem.
\end{proof}

\begin{theorem}\label{EA}
A totally umbilical hypersurface $\phi:M^{m}\longrightarrow (N^{m+1}, h)$ in an Einstein space with ${\rm Ric}^N=\lambda h$  is  $f$-biharmonic if and only if it is totally geodesic or a biharmonic hypersurface with constant mean curvature $H=\pm \sqrt{\lambda/m}$. 
\end{theorem}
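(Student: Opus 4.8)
The plan is to exploit the rigidity of the shape operator of a totally umbilical hypersurface together with the $f$-biharmonic equation (\ref{E1}) for hypersurfaces in Einstein spaces. First I would note that total umbilicity forces the shape operator to be $A = H\,\mathrm{Id}$: since $\tau(\phi) = m\eta = mH\xi$ and the (scalar) second fundamental form is pointwise proportional to the induced metric, taking the trace identifies the proportionality factor with the mean curvature $H$. Consequently $|A|^{2} = mH^{2}$ and $A({\rm grad}(fH)) = H\,{\rm grad}(fH)$, so the system (\ref{E1}) collapses to
\begin{equation*}
\begin{cases}
\Delta (fH) - (fH)(mH^{2} - \lambda) = 0,\\
H\,{\rm grad}(fH) + \frac{m}{2}(fH)\,{\rm grad}\,H = 0.
\end{cases}
\end{equation*}

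The key intermediate step is to prove that $H$ is constant on $M$ (here I assume $M$ connected and $m \ge 2$). For this I would use the Codazzi equation $(\nabla_{X}A)Y - (\nabla_{Y}A)X = (R^{N}(X,Y)\xi)^{\top}$; with $A = H\,\mathrm{Id}$ the left-hand side equals $(XH)Y - (YH)X$, and contracting over an orthonormal frame of $TM$ yields, on one side, $-(m-1)\,{\rm d}H$, and on the other side ${\rm Ric}^{N}(\,\cdot\,,\xi)$, which vanishes identically because $N$ is Einstein and $\xi$ is normal to $M$. Hence $(m-1)\,{\rm d}H = 0$, so $H$ is a constant. (This is essentially the classical fact that a totally umbilical hypersurface of an Einstein manifold has constant mean curvature, which one could also simply cite; the sign ambiguity in the Codazzi convention is immaterial.)

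With $H$ constant the reduced system becomes $H[\Delta f - f(mH^{2} - \lambda)] = 0$ and $H^{2}\,{\rm grad}\,f = 0$. If $H \equiv 0$, the hypersurface is totally umbilical with vanishing mean curvature, i.e.\ totally geodesic, giving the first alternative. If $H$ is a nonzero constant, the second equation forces ${\rm grad}\,f = 0$, so $f$ is constant and $\phi$ is in fact biharmonic; substituting $f = {\rm constant}$ into the first equation gives $mH^{2} = \lambda$, i.e.\ $H = \pm\sqrt{\lambda/m}$. For the converse, a totally geodesic hypersurface is minimal and hence $f$-biharmonic for every positive $f$ by Corollary \ref{23}; while a totally umbilical hypersurface with $H = \pm\sqrt{\lambda/m}$ constant satisfies the biharmonic form of (\ref{E1}) (the first equation reads $H(mH^{2} - \lambda) = 0$, the second is trivial), so it is biharmonic and a fortiori $f$-biharmonic.

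I expect the constancy of $H$ to be the only genuine obstacle; everything else is direct substitution and sign bookkeeping. One should also watch the degenerate cases: the contraction argument really needs $m \ge 2$, and when $\lambda \le 0$ the number $\pm\sqrt{\lambda/m}$ is either $0$ or non-real, so the theorem then merely asserts total geodesy, which is consistent.
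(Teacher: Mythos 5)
Your proposal is correct and follows essentially the same route as the paper: reduce the system (\ref{E1}) using $A=H\,\mathrm{Id}$, $|A|^2=mH^2$, invoke constancy of $H$ for totally umbilical hypersurfaces of Einstein spaces, and split into the cases $H\equiv 0$ (totally geodesic) and $H\neq 0$ (forcing $f$ constant and $mH^2=\lambda$). The only difference is that you prove the constant-mean-curvature fact directly via the contracted Codazzi equation, whereas the paper simply cites Koiso (and Lemma 2.1 of \cite{JMS}); your explicit treatment of the converse and of the degenerate cases is a mild improvement in rigor but not a different argument.
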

\begin{proof}
 If the hypersurface is totally umbilical, then we have $A(X)=HX$ for any vector tangent to the hypersurface and $|A|^2=mH^2$. Substituting these into (\ref{E1}) we have
\begin{equation}\label{E2}
\begin{cases}
\Delta (f\,H)-(f\,H )[mH^{2}-\lambda]=0,\\
H{\rm grad}(f\,H) +\frac{m}{2} (f\,H){\rm grad}\, H
=0.
\end{cases}
\end{equation}
On the other hand, it was proved in \cite{Ko} (see also Lemma 2.1 in \cite{JMS}) that a totally umbilical hypersurface in an Einstein space has constant mean curvature, i.e., $H={\rm constant}$.
If $H= 0$, then it is easily seen that the totally umbilical minimal hypersurface is actually totally geodesic. If $H={\rm constant}\ne 0$, then Equation of (\ref{E2}) is solved by $f={\rm constant}$ and

$mH^{2}-\lambda=0$. This implies that the totally umbilical hypersurface is a biharmonic hypersurafce with constant mean curvature $H=\pm \sqrt{\lambda/m}$. Summarizing the results in the two cases discussed above we obtain the theorem.
\end{proof}

As a consequence of Theorem \ref{EA}, we have
\begin{corollary}
A totally umbilical $f$-biharmonic hypersurface in Euclidean space $\mathbb{R}^{m+1}$ or hyperbolic space $H^{m+1}$ is a totally geodesic hypersurface, any totally umbilical $f$-biharmonic hypersurface in sphere $S^{m+1}$ is, up to isometries,  a part of the great sphere $S^m \subset S^{m+1}$ or a part of the small sphere  $S^m(\frac{1}{\sqrt{2}})$.
\end{corollary}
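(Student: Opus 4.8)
The plan is to obtain the corollary as a direct specialization of Theorem~\ref{EA}, turning its algebraic conclusion into geometry in each of the three space forms. First I would observe that a space form $N^{m+1}(C)$ of constant sectional curvature $C$ is an Einstein manifold with ${\rm Ric}^N = mC\,h$, so Theorem~\ref{EA} applies with $\lambda = mC$; hence a totally umbilical $f$-biharmonic hypersurface of $N^{m+1}(C)$ is either totally geodesic or a biharmonic hypersurface with constant mean curvature $H = \pm\sqrt{\lambda/m} = \pm\sqrt{C}$.

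Next I would dispose of the non-spherical cases. For Euclidean space $C = 0$ the second alternative forces $H \equiv 0$; but a totally umbilical hypersurface with $H \equiv 0$ has shape operator $A = H\,{\rm Id} = 0$ and is therefore totally geodesic, so in $\mathbb{R}^{m+1}$ only the totally geodesic hyperplane occurs. For hyperbolic space $C < 0$ the number $\sqrt{C}$ is not real, so the biharmonic alternative is vacuous and only the totally geodesic case survives. This establishes the first assertion of the corollary.

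It remains to treat the sphere, where after rescaling the metric I may assume $C = 1$. Theorem~\ref{EA} then gives that a totally umbilical $f$-biharmonic hypersurface of $S^{m+1}$ is either totally geodesic or biharmonic with $H = \pm 1$. I would now invoke the classical description of totally umbilical hypersurfaces of $S^{m+1}$: each is an open part of a round hypersphere $S^m(r)$ with $0 < r \le 1$ (an intersection of $S^{m+1}$ with an affine hyperplane), whose normalized mean curvature with respect to a suitable unit normal is $H = \sqrt{1-r^2}/r$. The totally geodesic case corresponds to $r = 1$, i.e.\ a part of the great sphere $S^m \subset S^{m+1}$, while $|H| = 1$ gives $1 - r^2 = r^2$, i.e.\ $r = \tfrac{1}{\sqrt 2}$, a part of the small sphere $S^m(\tfrac{1}{\sqrt 2})$ (which is indeed a proper biharmonic hypersurface of $S^{m+1}$, cf.\ the Corollary following Theorem~\ref{MT1}). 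Conversely both of these are totally umbilical and, being harmonic respectively biharmonic, are $f$-biharmonic, so the list is exact.

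I do not expect any genuine obstacle here: the statement is essentially a dictionary entry for Theorem~\ref{EA}. The only points needing care are the sign and normalization convention for the mean curvature, so that the value $H = \pm\sqrt{C}$ coming from Theorem~\ref{EA} is correctly matched with $H = \sqrt{1-r^2}/r$ for $S^m(r) \subset S^{m+1}$, and the small observation that ``totally umbilical with $H \equiv 0$'' already means ``totally geodesic,'' which is what makes the Euclidean and hyperbolic cases collapse to the single conclusion stated.
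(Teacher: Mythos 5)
Your proposal is correct and follows essentially the same route as the paper: both specialize Theorem~\ref{EA} to the space forms $\mathbb{R}^{m+1}$, $H^{m+1}$, $S^{m+1}$ viewed as Einstein spaces with $\lambda=0,-m,m$, and then identify what the dichotomy ``totally geodesic or biharmonic with $H=\pm\sqrt{\lambda/m}$'' means in each case. The only difference is cosmetic: where the paper cites the known classification of totally umbilical biharmonic hypersurfaces in space forms (\cite{CMO1}), you verify the same conclusion directly via the elementary dictionary for umbilical hyperspheres $S^m(r)\subset S^{m+1}$ with $H=\sqrt{1-r^2}/r$, which is a perfectly adequate substitute.
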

\begin{proof}
This follows from the well-known facts that space forms $\r^{m+1},\; H^{m+1}, S^{m+1}$ of constant sectional curvature $0, -1, 1$ are Einstein spaces with $\lambda= 0, -m, m$, and that totally umbilical biharmonic hypersurfaces in Euclidean spaces and hyperbolic spaces are totally geodesic, and the only totally umbilical biharmonic hypersurfaces in $S^{m+1}$ is a part of the  great sphere $S^m \subset S^{m+1}$ or a part of the small  $S^m(\frac{1}{\sqrt{2}})$ (\cite{CMO1}).
\end{proof}

It is well known that biharmonic hypersurfaces of Euclidean space $\mathbb{R}^{m+1}$ with at most two distinct principal curvature are minimal ones. The following example shows that this  is no longer true for $f$-biharmonic hypersurfaces.
\begin{example}
Let $\phi:D=\{ (\theta,x)\in (0,2\pi)\times \r ^{m-1}\}$ and  $\phi:D\longrightarrow (\r^{m+1}, h_0)$ with $\phi(\theta,x_1, \cdots, x_{m-1})=(R\cos\,\frac{\theta}{R}, R\sin\,\frac{\theta}{R}, x_1, \cdots, x_{m-1})$ be the isometric immersion into Euclidean space. One can check that it is a hypersurface  with two distinct principal curvatures, which is also an $f$-biharmonic hypersurface for any positive function $f$ from the family $f=C_1e^{x_1/R}+C_2e^{-x_1/R}$, where $C_1, C_2$ are constants.
\end{example}

In fact, we can take  
\begin{eqnarray}\notag
 && e_1={\rm d} \phi(\frac{\partial}{\partial \theta})=-\sin \frac{\theta}{R}\frac{\partial}{\partial y^1}+ \cos
\frac{\theta}{R}\frac{\partial}{\partial y^2},\\\notag
&&  e_i= {\rm d} \phi(\frac{\partial}{\partial x^{i-1}})=\frac{\partial}{\partial y^{1+i}},\; i= 2, \cdots, m,\\\notag
&&  \xi= \cos \frac{\theta}{R}\frac{\partial}{\partial y^1}+ \sin
\frac{\theta}{R}\frac{\partial}{\partial y^2}
\end{eqnarray}
as an orthonormal frame adapted to the hypersurface. Then a straightforward computation gives

\begin{eqnarray}\label{EJS}
\begin{cases}
Ae_1=-\frac{1}{R}e_1,\;\;Ae_i=0,\; \forall \; i=2, 3, \cdots, m.\\
H=\frac{1}{m}\sum_{i=1}^m\langle  Ae_i,e_i\rangle=-\frac{1}{mR}\ne 0\\
|A|^2=\sum_{i=1}^m|Ae_i|^2=\frac{1}{R^2},
\end{cases}
\end{eqnarray}
which show indeed that the hypersurface has two distinct principal curvatures and constant mean curvature.
Let $f: (0,2\pi)\times \r ^{m-1}\longrightarrow (0, \infty)$ be a function that does not depend on $\theta$, i.e., $f=f(x_1, \cdots, x_{m-1})$. Substituting this and (\ref{EJS}) into the \fb hypersurface equation (\ref{fbh3}) with $C=0$ we have 
\begin{equation}\label{TZ}
\Delta^{\r^{m-1}} f=\frac{1}{R^2}\,f.
\end{equation}
It follows that the cylinder $S^1\times \r^{m-1}\hookrightarrow \r^2\times \r^{m-1}$ is a an \fb hypersurface for any positive eigenfunction which solves equation (\ref{TZ}), in particular, any positive function from the family
$f=C_1e^{x_1/R}+C_2e^{-x_1/R}$, where $C_1, C_2$ are constants, is a solution of (\ref{TZ}).

\section{Biharmonic conformal immersions of surfaces into $3$-manifolds}

It is well known (See, e.g., \cite{Ta}) that a conformal immersion $\varphi : (M^2 ,{\bar g}) \longrightarrow (N^n ,h)$  with  $\varphi^{*}h=\lambda^2{\bar g}$ is harmonic if and only if the surface $\varphi(M^2) \hookrightarrow (N^n,h)$ is a minimal surface, i.e., the isometric immersion $\varphi(M^2) \hookrightarrow (N^n,h)$ is harmonic.  The following corollary, which follows from Theorem A, can be viewed as a generalization of this well-known result.
\begin{corollary}\label{Co41}
A conformal immersion $\varphi : (M^2 ,{\bar g}) \longrightarrow (N^n ,h)$  with  $\varphi^{*}h=\lambda^2{\bar g}$ is biharmonic if and only if the surface (i.e., the isometric immersion) $\varphi(M^2) \hookrightarrow (N^n,h)$ is an \fb surface with $f=\lambda^2$.
\end{corollary}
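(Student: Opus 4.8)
The plan is to obtain this corollary as a direct specialization of Theorem~A, whose ``in particular'' clause already records the equivalence between $f$-biharmonicity of an isometric immersion and biharmonicity of the associated conformal immersion. First I would set up notation to make the substitution transparent: write $g=\varphi^{*}h$ for the metric induced on $M^{2}$ by the immersion $\varphi$ (it is a genuine metric since $\varphi$ is an immersion and $\varphi^{*}h=\lambda^{2}\bar g$ with $\lambda>0$), so that the ``surface'' referred to in the statement is the \emph{isometric} immersion $\varphi:(M^{2},g)\hookrightarrow(N^{n},h)$, while the conformal immersion in the hypothesis is $\varphi:(M^{2},\bar g)\longrightarrow(N^{n},h)$ with $g=\lambda^{2}\bar g$, equivalently $\bar g=\lambda^{-2}g$.

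Next I would invoke Theorem~A with the choice $f=\lambda^{2}$. It asserts that the isometric immersion $\varphi:(M^{2},g)\hookrightarrow(N^{n},h)$ is an $f$-biharmonic surface if and only if the conformal immersion $\varphi:(M^{2},f^{-1}g)\longrightarrow(N^{n},h)$ is a biharmonic map. Since $f^{-1}g=\lambda^{-2}g=\bar g$, the right-hand condition is exactly ``$\varphi:(M^{2},\bar g)\longrightarrow(N^{n},h)$ is biharmonic,'' which is the condition appearing in the corollary. Reading this equivalence in both directions gives the asserted ``if and only if,'' with $f=\lambda^{2}$ being the function that makes it work.

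I do not expect a genuine obstacle here: all the analytic content is carried by Theorem~A, and the only point requiring care is the bookkeeping of conformal factors, namely verifying that scaling the induced metric $g=\varphi^{*}h$ by $f^{-1}=\lambda^{-2}$ returns precisely the background metric $\bar g$. It is perhaps worth adding a sentence explaining why $f=\lambda^{2}$ is the \emph{forced} choice---it is the unique positive function with $f^{-1}g=\bar g$---and noting that the corollary is the biharmonic analogue of the classical fact recalled just before it, that a conformal immersion of a surface is harmonic iff its image is a minimal surface: there no conformal factor is needed because harmonicity from a surface is itself conformally invariant, whereas biharmonicity is not, and that failure of invariance is exactly what forces the correction function $f=\lambda^{2}$ to appear.
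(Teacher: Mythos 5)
Your proposal is correct and coincides with the paper's own route: the corollary is stated as a direct consequence of Theorem~A with $f=\lambda^{2}$, using exactly the bookkeeping $f^{-1}g=\lambda^{-2}(\lambda^{2}\bar g)=\bar g$ that you spell out. Nothing further is needed.
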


It was proved in \cite{Ou1} that a  hypersurface, i.e., an isometric immersion $M^m\hookrightarrow (N^{m+1}, h)$ with mean curvature $H$ and the shape operator $A$ is biharmonic if and only if
\begin{equation}\label{BHS}
\begin{cases}
\Delta H-H |A|^{2}+H{\rm
Ric}^N(\xi,\xi)=0,\\
 2A\,({\rm grad}\,H) +\frac{m}{2} {\rm grad}\, H^2
-2\, H \,({\rm Ric}^N\,(\xi))^{\top}=0,
\end{cases}
\end{equation}
where ${\rm Ric}^N : T_qN\longrightarrow T_qN$ denotes the Ricci
operator of the ambient space defined by $\langle {\rm Ric}^N\, (Z),
W\rangle={\rm Ric}^N (Z, W)$,  $\xi$ is the unit normal vector field, and $\Delta$ and ${\rm grad}$ denote the Laplace and the gradient operators defined by the induced metric on the hypersurface.\\

Later in \cite{Ou3}, it was proved that a conformal immersion $\varphi:(M^2, {\bar g})\longrightarrow (N^3, h)$ with $\varphi^{*}h=\lambda^2{\bar g}$ is biharmonic if and only if
\begin{equation}\label{BCI3}
\begin{cases}
\Delta H-H[ |A|^{2}-{\rm
Ric}^N(\xi,\xi)-(\Delta \lambda^2)/\lambda^2]+4g({\rm grad} \ln \lambda), {\rm grad} H)=0,\\
A\,({\rm grad}\,H) +H [{\rm grad}\, H
-({\rm Ric}^N\,(\xi))^{\top}]+2 A\,({\rm grad}\,\ln \lambda)]=0,
\end{cases}
\end{equation}
where $\xi$, $A$, and $H$ are the unit normal vector field, the
shape operator, and the mean curvature function  of
the surface $\varphi(M)\subset (N^{3}, h)$ respectively, and the
operators $\Delta,\; {\rm grad}$ and $|,|$ are taken with respect to
the induced metric $g=\varphi^{*}h=\lambda^2{\bar g}$ on the surface.\\

Our next theorem gives an improved equation for biharmonic conformal immersions of surfaces into a $3$-manifold by writing it in a  form similar to biharmonic surface equation (\ref{BHS}),  which  turns out to be very useful in proving some classification and/or  existence results on biharmonic conformal surfaces.\\

\begin{theorem}\label{NEW}
A conformal immersion
\begin{equation}\label{MAP1}
\varphi : (M^{2},{\bar g}) \longrightarrow
(N^3,h)
\end{equation}
 into a $3$-dimensional manifold with
$\varphi^{*}h=\lambda^2{\bar g}$ is biharmonic if and only if the surface $\varphi(M^2)\hookrightarrow (N^3, h)$ is an $f$-biharmonic surface with $f=\lambda^2$, which is characterized by the equation
\begin{equation}\label{M03}
\begin{cases}
\Delta (\lambda^2 H )-(\lambda^2H)[|A|^2-{\rm Ric}^N(\xi,\xi)]=0,\\A({\rm grad} (\lambda^2 H))+ (\lambda^2 H) [{\rm grad}
H- \,({\rm Ric}^N\,(\xi))^{\top}]=0.
\end{cases}
\end{equation}
where $\xi$, $A$, and $H$ are the unit normal vector field, the
shape operator, and the mean curvature function  of
the surface $\varphi(M)\subset (N^3, h)$ respectively, and the
operators $\Delta,\; {\rm grad}$ and $|,|$ are taken with respect to
the induced metric $g=\varphi^{*}h=f{\bar g}$ on the
surface.
\end{theorem}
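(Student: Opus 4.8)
The plan is to deduce this statement from two results already in hand. The conformality half of the ``if and only if'' is exactly Corollary~\ref{Co41} (hence, ultimately, Theorem~A), and the explicit system~(\ref{M03}) is nothing but the improved $f$-biharmonic hypersurface equation of Theorem~\ref{FBH} specialized to dimension $m=2$ and to the weight $f=\lambda^2$. So the ``proof'' is really a matter of assembling these pieces and checking that the conventions line up.

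First I would record the equivalence ``$\varphi$ biharmonic $\iff$ the surface $\varphi(M^2)\hookrightarrow(N^3,h)$ is $f$-biharmonic with $f=\lambda^2$''. Write $g=\varphi^*h=\lambda^2{\bar g}$ for the induced metric and let $\iota:(M^2,g)\hookrightarrow(N^3,h)$ be the isometric immersion underlying $\varphi$; note $\iota$ and $\varphi$ are the same underlying map, carrying different domain metrics. Since $\lambda^{-2}g={\bar g}$, Theorem~A says that $\iota$ is $\lambda^2$-biharmonic precisely when $\iota:(M^2,{\bar g})\to(N^3,h)$ is biharmonic, i.e. precisely when the conformal immersion $\varphi$ is biharmonic. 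This is Corollary~\ref{Co41} in the case $n=3$, so one may simply cite it.

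Next I would obtain~(\ref{M03}). A surface in a $3$-manifold is a hypersurface, so Theorem~\ref{FBH} applies to the isometric immersion $\iota$ with $m=2$. Since $\frac{m}{2}=1$ when $m=2$, the system~(\ref{fbh2}) with $f=\lambda^2$ becomes
\begin{equation*}
\begin{cases}
\Delta(\lambda^2H)-(\lambda^2H)[|A|^2-{\rm Ric}^N(\xi,\xi)]=0,\\
A({\rm grad}(\lambda^2H))+(\lambda^2H)[{\rm grad}\,H-({\rm Ric}^N(\xi))^{\top}]=0,
\end{cases}
\end{equation*}
which is literally~(\ref{M03}); here the operators $\Delta,{\rm grad},|\cdot|$ are, as in Theorem~\ref{FBH}, those of the induced metric $g=\varphi^*h=\lambda^2{\bar g}$, which is exactly the convention declared in the statement. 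Together with the previous paragraph this yields the theorem.

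Because the statement is an almost immediate corollary of Theorems~A and~\ref{FBH}, there is no serious obstacle here — the real work was done in establishing Theorem~\ref{FBH} (and, behind it, the original $f$-biharmonic hypersurface equation~(\ref{fbh}) of \cite{Ou4}). The only point demanding care is bookkeeping: one must make sure that $\Delta,{\rm grad},|\cdot|$ refer throughout to the \emph{induced} metric $g=\lambda^2{\bar g}$ rather than to ${\bar g}$, so that the substitution $m=2$, $f=\lambda^2$ into~(\ref{fbh2}) is legitimate. As an optional independent check, one can instead start from the biharmonic conformal immersion equation~(\ref{BCI3}) of \cite{Ou3}: multiplying each of its two equations by $\lambda^2$ and using $\Delta(\lambda^2H)=\lambda^2\Delta H+H\,\Delta\lambda^2+2\langle{\rm grad}\,\lambda^2,{\rm grad}\,H\rangle$ together with ${\rm grad}\,\lambda^2=2\lambda^2\,{\rm grad}\ln\lambda$ (and the analogous expansion of $A({\rm grad}(\lambda^2H))$) collapses all the $\lambda$-dependent terms of~(\ref{BCI3}) into the compact form~(\ref{M03}).
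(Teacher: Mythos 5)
Your proposal follows exactly the paper's own argument: note that $\varphi$ with $\varphi^*h=\lambda^2\bar g$ is the isometric immersion $(M^2,g=\lambda^2\bar g)\hookrightarrow(N^3,h)$, invoke Corollary~\ref{Co41} (i.e.\ Theorem~A) for the biharmonic/$f$-biharmonic equivalence, and then specialize Theorem~\ref{FBH}'s equation~(\ref{fbh2}) to $m=2$, $f=\lambda^2$ to obtain~(\ref{M03}). It is correct, and your extra consistency check against~(\ref{BCI3}) is a harmless bonus rather than a different route.
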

\begin{proof}
One can easily see that a map $\varphi : (M^{2},{\bar g}) \longrightarrow
(N^3,h)$ is a conformal immersion with conformal factor $\lambda$, i.e., $\varphi^{*}h=\lambda^2{\bar g}$, if and only if the map
\begin{equation} \label{IDI}
\varphi : (M^{2}, g=\lambda^2{\bar g}) \longrightarrow
(N^3,h)
\end{equation}
is an isometric immersion. By Corollary \ref{Co41}, the conformal immersion $\varphi : (M^{2}, {\bar g}) \longrightarrow
(N^3,h)$ is  biharmonic if and only if the surface $\varphi : (M^{2}, \lambda^2{\bar g}) \longrightarrow
(N^3,h)$ is an \fb surface. Applying Equation (\ref{fbh2}) with $m=2$ and $f=\lambda^2$ we obtain the biharmonic conformal equation (\ref{M03}) and hence the theorem.
\end{proof}
As an immediate consequence of Theorem \ref{NEW}, we have the following
\begin{corollary}\label{C2}
A conformal immersion $\varphi : (M^{2}, {\bar
g}) \longrightarrow (N^3(C),
h_0)$ into $3$-dimensional space of constant sectional curvature $C$
with $\varphi^{*}h_{0}=\lambda^2{\bar
g}$ is biharmonic if and only if
\begin{equation}\label{C}
\begin{cases}
\Delta (\lambda^2 H )-(\lambda^2H)(|A|^2-2C)=0,\\A({\rm grad} (\lambda^2 H))+ (\lambda^2 H) {\rm grad}
H=0.\\
\end{cases}
\end{equation}
where  $A$ and $H$ are the shape operator and the mean curvature function of the surface
respectively, and the operators $\Delta,\; {\rm grad}$ and $|,|$ are
taken with respect to the induced metric
$g=\varphi^{*}h=\lambda^2{\bar g}$ on the surface.
\end{corollary}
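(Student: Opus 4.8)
The plan is to derive the system \eqref{C} as a direct specialization of Theorem \ref{NEW}. Since a space form $N^3(C)$ is in particular an Einstein manifold with Ricci operator ${\rm Ric}^N = 2C\, h$ (the Ricci curvature of an $n$-dimensional space form of sectional curvature $C$ is $(n-1)C$, so here $n=3$ gives $2C$), the only work is to substitute this curvature information into the general biharmonic conformal equation \eqref{M03}.

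First I would note that ${\rm Ric}^N(\xi,\xi) = 2C\,h(\xi,\xi) = 2C$ because $\xi$ is a unit normal vector field. Substituting this into the first equation of \eqref{M03} immediately replaces the bracket $[|A|^2 - {\rm Ric}^N(\xi,\xi)]$ by $[|A|^2 - 2C]$, yielding the first equation of \eqref{C}. Second, for the tangential part I would observe that ${\rm Ric}^N(\xi) = 2C\,\xi$ as a vector, hence its tangential projection $({\rm Ric}^N(\xi))^{\top} = 2C\,\xi^{\top} = 0$, since $\xi$ is normal to the surface. Plugging this into the second equation of \eqref{M03} kills the $({\rm Ric}^N(\xi))^{\top}$ term and leaves exactly $A({\rm grad}(\lambda^2 H)) + (\lambda^2 H)\,{\rm grad}\,H = 0$, which is the second equation of \eqref{C}.

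Finally I would invoke Theorem \ref{NEW} to conclude that biharmonicity of the conformal immersion $\varphi$ is equivalent to the $f$-biharmonicity of the associated isometric immersion with $f = \lambda^2$, which by the above substitutions is characterized precisely by the system \eqref{C}; the operators $\Delta$, ${\rm grad}$, $|\cdot|$ remain those of the induced metric $g = \varphi^* h_0 = \lambda^2 \bar g$, exactly as in the statement. There is no real obstacle here — the corollary is a routine consequence of Theorem \ref{NEW} once the curvature tensor of a space form is recalled; the only point requiring a moment's care is the correct normalization of the Ricci curvature of $N^3(C)$, namely that ${\rm Ric}^N = 2C\,h$ rather than $3C\,h$ or $C\,h$, and the observation that both $h(\xi,\xi)=1$ and $\xi^{\top}=0$ simplify the curvature terms.
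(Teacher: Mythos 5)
Your proposal is correct and follows exactly the route the paper intends: the corollary is stated as an immediate consequence of Theorem \ref{NEW}, obtained by substituting ${\rm Ric}^N=2C\,h$ (so ${\rm Ric}^N(\xi,\xi)=2C$ and $({\rm Ric}^N(\xi))^{\top}=0$) into the system \eqref{M03}. Your care with the normalization $(n-1)C=2C$ and the vanishing of the tangential Ricci term is precisely what is needed, so nothing is missing.
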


\begin{theorem}\label{MT2}
Let $\varphi : (M^{2},{\bar g}) \longrightarrow (\r^3, h_0)$ be a
 biharmonic conformal immersion from a complete surface into
$3$-dimensional Euclidean space with
$\varphi^{*}h_0=\lambda^2{\bar g}$. If the mean curvature $H$ of the surface $\varphi : (M^{2}, g=\lambda^2{\bar g}) \longrightarrow (\r^3, h_0)$ satisfies $\int_M\,\lambda^4H^2 \,dv_{g}<\infty$.  Then the biharmonic conformal immersion $\varphi$ is a minimal immersion.
\end{theorem}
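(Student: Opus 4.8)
The plan is to reduce the statement to the system in Corollary~\ref{C2} with $C=0$ and then run the same subharmonicity / Schr\"odinger--equation argument used in the proof of Theorem~\ref{Ric}. First I would put $u=\lambda^2 H$ on the surface $M^2$ carrying the induced metric $g=\lambda^2{\bar g}$. Since $\varphi$ is a biharmonic conformal immersion into $\r^3$, Corollary~\ref{C2} with $C=0$ gives
\[
\begin{cases}
\Delta u-u\,|A|^{2}=0,\\
A({\rm grad}\,u)+u\,{\rm grad}\,H=0,
\end{cases}
\]
where $\Delta,\,{\rm grad},\,|\cdot|$ are taken with respect to $g$. The first equation is a Schr\"odinger--type equation $\Delta_g u=\alpha u$ with $\alpha=|A|^{2}\ge 0$ a smooth function, and the hypothesis $\int_M\lambda^4H^2\,dv_g=\int_M u^2\,dv_g<\infty$ says exactly that $u\in L^2(M,g)$.

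Next I would split into the compact and the complete noncompact cases. If $M$ is compact, the routine identity $\Delta(u^2)=2u\,\Delta u+2|{\rm grad}\,u|^2=2|A|^2u^2+2|{\rm grad}\,u|^2\ge 0$ shows that $u^2$ is subharmonic on a closed manifold, hence constant. If $M$ is complete and noncompact, I would apply Lemma~A to the $L^2$ solution $u$ of $\Delta_g u=|A|^2 u$ with $|A|^2\ge 0$, again concluding that $u$ is constant. So in either case $u=\lambda^2 H\equiv C$ for some constant $C$.

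Finally I would rule out $C\ne 0$. Since $\lambda>0$, if $C=0$ then $H\equiv 0$ and $\varphi$ is a minimal immersion, which is the desired conclusion. If $C\ne 0$, then $H$ is nowhere zero; substituting $\lambda^2 H=C$ into the first equation yields $0=\Delta C-C|A|^2=-C|A|^2$, so $|A|^2\equiv 0$, the surface is totally geodesic, and hence $H\equiv 0$, contradicting that $H$ is nowhere zero. Therefore $C=0$ and the theorem follows.

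The only point requiring care is the application of Lemma~A: one must check that its hypotheses are in force, namely that $(M,g)$ is complete (part of the hypothesis), that $\alpha=|A|^2$ is a nonnegative smooth function (clear from smoothness of the immersion), and that $u=\lambda^2 H$ is a smooth $L^2$ function --- the finiteness assumption being stated precisely so that $\|u\|_{L^2}^2=\int_M\lambda^4H^2\,dv_g<\infty$. Everything else is the computation $\Delta(u^2)=2u\,\Delta u+2|{\rm grad}\,u|^2$ already exploited in the proof of Theorem~\ref{Ric}, so I do not expect any further obstacle.
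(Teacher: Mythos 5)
Your proposal is correct, and up to the last step it coincides with the paper's argument: both reduce to the system of Corollary \ref{C2} with $C=0$, observe that $u=\lambda^2H$ is an $L^2$ solution of the Schr\"odinger-type equation $\Delta u=|A|^2u$ on a complete surface, and invoke Lemma A to get $u\equiv C$ constant (your explicit split into the compact case, handled by subharmonicity of $u^2$, and the noncompact case, handled by Lemma A, is in fact a touch more careful than the paper, since Lemma A is stated for complete \emph{non-compact} manifolds). Where you diverge is the endgame. The paper feeds $\lambda^2H=C$ into the \emph{second} equation of (\ref{M310}) to conclude $H$ is constant; when $H=C_1\neq0$ it deduces $\lambda^2=C/C_1$ is constant, so the conformal immersion is homothetic, and then appeals to the Chen--Jiang result that a biharmonic homothetic immersion of a surface into $\r^3$ is minimal. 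You instead rule out $C\neq0$ directly from the \emph{first} equation: $\lambda^2H=C\neq0$ forces $0=\Delta C=C|A|^2$, hence $|A|^2\equiv0$, so the surface is totally geodesic and $H\equiv0$, a contradiction. Your route is more self-contained (no external theorem of Chen--Jiang is needed) and mirrors exactly the contradiction argument the paper itself uses in the proofs of Theorems \ref{Ric} and the Einstein-space classification, here with ${\rm Ric}^N=0$; the paper's route, on the other hand, records the extra structural information that any hypothetical nonminimal case would have to be homothetic before dismissing it. Both arguments are valid and yield $H\equiv0$, i.e.\ the asserted minimality.
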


\begin{proof}
Note that $\varphi : (M^{2},{\bar g}) \longrightarrow (\r^3, h_0)$ is a biharmonic conformal immersion implies that the surface $ (M^{2}, g=\lambda^2{\bar g}) $ can be biharmonically conformally immersed into Euclidean space $ (\r^3, h_0)$. By Corollary \ref{C2}, we have
\begin{equation}\label{M310}
\begin{cases}
\Delta (\lambda^2 H )=(\lambda^2 H) |A|^2,\\A({\rm grad} (\lambda^2  H))+ (\lambda^2 H) {\rm grad}
H=0.\\
\end{cases}
\end{equation}
where  $A$, and $H$ are  the shape operator, and the mean curvature
function of the surface  $\varphi(M)\subset (\r^3, h_0)$
respectively, and the operators $\Delta,\; {\rm grad}$ and $|,|$ are
taken with respect to the induced metric $g=\varphi^{*}h=\lambda^2 {\bar g}$ on $M$.\\

From  the assumption $\int_M\,\lambda^4 H^2 \,dv_{g}<\infty$ and the first equation of (\ref{M310}) we see that the function $\lambda^2 H\in L^2(M)$ is a solution of the Schr$\ddot{\rm o}$dinger type equation
\begin{equation}
\Delta u = \alpha u	
\end{equation} 
for a fixed nonnegative function $\alpha=|A|^2$ on a complete manifold $(M^2, g)$.
By Lemma A (Section 3), we conclude that $\lambda^2H=C$,  a constant. Substituting this into the second equation of (\ref{M310}) we have either $H=0$ which means the conformal immersion is minimal, or $H=C_1$ a constant. If $H=C_1\ne 0$, then $\lambda^2H=C$ implies that $\lambda^2=C/H=C/C_1$ is a constant.  In this case, the conformal immersion is a homothetic immersion. By a well-known result of Chen and Jiang, any biharmonic homothetic immersion of a surface into $\r^3$ is minimal. Therefore, in either case, we conclude that the conformal biharmonic immersion is a minimal homothetic immersion.
\end{proof}

Motivated by the beautiful theory of minimal surfaces as conformal harmonic immersions and the attempt to understand the relationship among biharmonicity, conformality and $f$-biharmonicity of maps from surfaces, we studied biharmonic conformal immersions of surfaces in \cite{Ou2}, and \cite{Ou3}. A Fundamental question is: what are the geometric and/or topological obstacles for a surface to admit a biharmonic conformal immersion into a ``nice space" like a space form?  Recall (See \cite{Ou3}) that a surface in a Riemannian manifold $(N^{3}, h)$ defined by an isometric immersion $\varphi: (M^{2}, g) \longrightarrow (N^{3},h)$ is said to admit a {\em biharmonic conformal immersion} into $(N^{3}, h)$, if there exists  a function $f :M^2\longrightarrow \r^{+}$ such that the conformal immersion $\varphi: (M^{2}, {\bar g}=f^{-1}g) \longrightarrow (N^{3},h)$ with conformal factor $f$ is a biharmonic map. In such a case, we also say that the surface $\varphi: (M^{2}, g) \longrightarrow (N^{3},h)$  can be {\em biharmonically conformally immersed} into $(N^{3}, h)$. Later in studying $f$-biharmonic maps and \fb submanifolds in \cite{Ou4}, Theorem A was proved. It follows from Theorem A that a surface  that $\varphi: (M^{2}, g) \longrightarrow (N^{3},h)$ that admits a  biharmonic conformal immersion into $(N^{3}, h)$ if and only if the surface is an \fb surface.\\

Examples of surfaces that can be biharmonically conformally immersed into a $3$-manifold include the following

\begin{itemize}
\item For any biharmonic conformal immersion  $\varphi: (M^{2}, {\bar g}) \longrightarrow (N^{3}, h)$ with $\varphi^{*}h=\lambda^2{\bar g}$, its associated surface  $\varphi: (M^{2}, g=\varphi^{*}h) \longrightarrow (N^{3}, h)$ admits a biharmonic conformal immersion into $(N^{3}, h)$ as we see that there exists $f=\lambda^2$ such that $f^{-1}(\varphi^{*}h)= {\bar g}$. For examples of biharmonic conformal immersions of surfaces, see \cite{Ou2} and \cite{Ou3}.
\item Any minimal surface (i.e., harmonic isometric immersion) $\varphi:M^2\longrightarrow (N^3,h)$ admits a biharmonic conformal immersion into $(N^3,h)$. This is because, by Corollary \ref{23}, a minimal surface $\varphi:M^2\longrightarrow (N^3,h)$ is \fb for any positive function $f$ defined on the surface. 
\end{itemize}

Using the improved equation for  biharmonic conformal immersions of surfaces we can prove the following
\begin{theorem}
If a compact surface $M^2\hookrightarrow S^3$ with the squared norm of the second fundamental form $|A|^2\ge 2$ can be biharmonically conformally immersed into $S^3$, then  $M^2$ is minimal, or up to a homothetic change of the metric, $M^2= S^2(\frac{1}{2})$.
\end{theorem}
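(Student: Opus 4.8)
The plan is to specialize the compact subharmonic argument already used for Theorem \ref{Ric} and for the Einstein-space classification to the space form $S^3$, and then to identify the resulting surface. By the remarks preceding the statement (a consequence of Theorem A), the surface $M^2\hookrightarrow S^3$ can be biharmonically conformally immersed into $S^3$ if and only if the isometric immersion $(M^2,g)\hookrightarrow S^3$ is $f$-biharmonic for some positive function $f$; equivalently, the conformal immersion $\varphi:(M^2,\bar g=f^{-1}g)\to(S^3,h_0)$ with $\varphi^*h_0=f\bar g=g$ is biharmonic. Putting $\lambda^2=f$ in Corollary \ref{C2} with $C=1$, this is the system
\[
\Delta(fH)-(fH)\big(|A|^2-2\big)=0,\qquad A\big(\mathrm{grad}(fH)\big)+(fH)\,\mathrm{grad}\,H=0,
\]
where $A$, $H$, $\Delta$ and $\mathrm{grad}$ are taken with respect to the induced metric $g$.

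I would then compute $\Delta\big((fH)^2\big)=2(fH)\,\Delta(fH)+2\,|\mathrm{grad}(fH)|^2$ and substitute the first equation, obtaining
\[
\Delta\big((fH)^2\big)=2(fH)^2\big(|A|^2-2\big)+2\,|\mathrm{grad}(fH)|^2\ \ge\ 0
\]
by the hypothesis $|A|^2\ge 2$. Hence $(fH)^2$ is a subharmonic function on the compact manifold $M^2$, so it is constant and $fH\equiv C$ for some constant $C$. If $C=0$, then $f>0$ forces $H\equiv 0$ and $M^2$ is minimal, which is the first alternative. If $C\ne 0$, then $H$ vanishes nowhere; $\mathrm{grad}(fH)=0$ reduces the second equation to $C\,\mathrm{grad}\,H=0$, so $H$ is a nonzero constant, $f=C/H$ is a positive constant, and $\Delta(fH)=0$ together with the first equation forces $|A|^2\equiv 2$.

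It remains to identify the surface in the second case. Since $\kappa_1+\kappa_2=2H$ and $\kappa_1^2+\kappa_2^2=|A|^2=2$ are both constant, the principal curvatures $\kappa_1,\kappa_2$ are individually constant, so $\varphi(M^2)$ is an isoparametric surface of $S^3$. By Cartan's classification, a compact isoparametric surface of $S^3$ is a totally geodesic $S^2$ (excluded, since there $|A|^2=0$), a flat torus $S^1(a)\times S^1(b)$ with $a^2+b^2=1$ (for which $|A|^2=a^2/b^2+b^2/a^2=2$ only when $a=b=\tfrac1{\sqrt2}$, i.e. the minimal Clifford torus, excluded since $H\ne 0$), or a geodesic sphere; hence $\varphi(M^2)$ is the geodesic sphere with $|A|^2=2\kappa^2=2$, that is, up to an isometry of $S^3$, the small hypersphere $S^2(\tfrac1{\sqrt2})\subset S^3$. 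Thus $M^2$, as a Riemannian surface, is a round $2$-sphere; since the conformal factor $f$ is pinned down by the equations only up to a positive multiplicative constant, the metric $\bar g=f^{-1}g$ is determined only up to a homothety, which is precisely the second alternative: up to a homothetic change of the metric, $M^2=S^2(\tfrac12)$. (Alternatively, once $f$ is known to be constant the surface $\varphi(M)\hookrightarrow S^3$ is itself biharmonic with $H\ne 0$, and one may instead invoke the classification of proper-biharmonic surfaces of $S^3$ in \cite{CMO1}.)

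Everything through the subharmonic step is the routine argument used twice already in Section \ref{Sect3}; the substantive point, and the step I expect to be the main obstacle, is the identification in the last paragraph -- that a compact surface of $S^3$ with constant principal curvatures (equivalently, a compact biharmonic surface of $S^3$ with nonzero mean curvature) is the small hypersphere. This is classical (Cartan's isoparametric classification in dimension $3$, or \cite{CMO1}), so the proof is short modulo that citation.
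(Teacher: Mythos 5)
Your proposal is correct and follows essentially the same route as the paper: the same specialization of Corollary \ref{C2} with $C=1$, the same computation of $\Delta\big((fH)^2\big)$ and subharmonicity-on-a-compact-manifold argument to get $fH$ constant, and the same case split via the second equation into $H\equiv 0$ or $H$, $f$ both constant. The only difference is cosmetic: in the nonminimal case the paper immediately notes that constant $f$ makes the immersion biharmonic up to homothety and cites \cite{CMO1}, which is exactly the alternative you mention, while your primary identification via $|A|^2\equiv 2$ and Cartan's isoparametric classification is an equivalent, slightly more self-contained way to reach $S^2(\tfrac{1}{\sqrt{2}})$.
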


\begin{proof}
Suppose the surface is given by an isometric immersion $\phi:M^2\longrightarrow (S^3,h)$, where $h$ denotes the standard metric on $S^3$ with constant sectional curvature $1$. By definition, if the surface admits a biharmonic conformal immersion into $S^3$, then there exists a function $\lambda: M\longrightarrow (0, \infty)$ such that the conformal immersion $\phi:(M^2, {\bar g}=\lambda^2 g)\longrightarrow (S^3,h)$ is biharmonic. So,  by (\ref{C}), we have 
\begin{equation}\label{S3}
\begin{cases}
\Delta (\lambda^2 H )-(\lambda^2H)(|A|^2-2)=0,\\A({\rm grad} (\lambda^2 H))+ (\lambda^2 H) {\rm grad}
H=0.\\
\end{cases}
\end{equation}
We compute
\begin{eqnarray}\notag
\Delta ((\lambda^2 H )^2)&=&2(\lambda^2 H )\Delta (\lambda^2 H )+2|{\rm grad}  (\lambda^2 H )^2)|^2\\\label{La}
&=&2(\lambda^2 H )^2(|A|^2-2)+2|{\rm grad}  (\lambda^2 H )^2)|^2.
\end{eqnarray}
where in obtaining the second equality we have used the first equation of (\ref{S3}).  From Equation (\ref{La})  and the assumption that $|A|^2\ge 2$ we conclude that $\Delta ((\lambda^2 H )^2)\ge 0$, i.e.,  the function $\lambda^2 H$ is a subharmonic function on the compact manifold $M$. It follows that $\lambda^2 H$ is a constant function. Using this and the second equation of (\ref{S3}) we see that $H$ must be a constant. If $H=0$, then the surface $\phi:M^2\longrightarrow (S^3,h)$ is minimal. Otherwise, if $H=C\ne 0$,  then $\lambda$ is also a constant and hence, up to a homothety, the compact surface $\phi:M^2\longrightarrow (S^3,h)$ is biharmonic. It follows from a well-known result (\cite{CMO1}) of biharmonic surfaces in $S^3$ that $M^2= S^2(\frac{1}{2})$. Thus, we obtain the theorem.
\end{proof}

\end{document}